\theoremstyle{plain}
\newtheorem{The}{Theorem}%[section]
\newtheorem*{The*}{Theorem}
\newtheorem{Lem}{Lemma}
\newtheorem*{Cor*}{Corollary}
\theoremstyle{definition}
\newtheorem{Rem}{Remark}
\newtheorem{Exa}{Example}
\newtheorem*{Rem*}{Remark}
\numberwithin{equation}{section}
\DeclareMathOperator{\End}{End}
\DeclareMathOperator{\SL}{SL}
\DeclareMathOperator{\SU}{SU}
\DeclareMathOperator{\Span}{Span}
\newcommand{\dvector}[1]{{\left(\begin{matrix}#1\end{matrix}\right)}}
\DeclareMathOperator{\dbar}{\bar\partial}
\newcommand{\R}{\mathbb{R}}
\newcommand{\Q}{\mathbb{Q}}
\newcommand{\C}{\mathbb{C}}
\newcommand{\N}{\mathbb{N}}
\newcommand{\Z}{\mathbb{Z}}
\newcommand{\CP}{\mathbb{CP}}
\begin{document}

\title{The spectral curve theory for $(k,l)$-symmetric CMC surfaces}

\author{Lynn Heller}

 \author{Sebastian Heller}

\author{Nicholas Schmitt}

\address{Lynn Heller \\
  Institut f\"ur Mathematik\\
 Universit{\"a}t T\"ubingen\\ Auf der Morgenstelle
10\\ 72076 T¬ubingen\\ Germany
 }
 \email{lynn-jing.heller@uni-tuebingen.de}

\address{Sebastian Heller \\
  Institut f\"ur Mathematik\\
 Universit{\"a}t T\"ubingen\\ Auf der Morgenstelle
10\\ 72076 T¬ubingen\\ Germany
 }
 \email{heller@mathematik.uni-tuebingen.de}
 
\address{Nicholas Schmitt \\
  Institut f\"ur Mathematik\\
 Universit{\"a}t T\"ubingen\\ Auf der Morgenstelle
10\\ 72076 T¬ubingen\\ Germany
 }
\email{nschmitt@mathematik.uni-tuebingen.de}

%\subjclass{Primary 53A05, 53 A 30, 53C42; Secondary 37K15}

\subjclass[2010]{53A10, 53C42, 53C43}

%\date{\today}

\thanks{The first author is supported by the European Social Fund, by the Ministry of Science, Research and the Arts Baden-W\"urtemberg and by the Baden-W\"urtemberg Foundation, the other authors are supported by the DFG through the project HE 6829/1-1.}

\begin{abstract}
Constant mean curvature surfaces in $S^3$ can be studied via their associated family of flat connections. In the case of tori this approach has led to a deep understanding of the moduli space of all CMC tori.
For compact CMC surfaces of higher genus the theory is far more involved due to the non abelian nature of their fundamental group. In this paper we extend the spectral curve theory for tori developed in \cite{Hi, PiSt} and for genus $2$ surfaces \cite{He3} to CMC surfaces in $S^3$ of genus $g=k\cdot l$ with commuting $\Z_{k+1}$ and $\Z_{l+1}$ symmetries. We determine their associated family of flat connections via certain flat line bundle connections
parametrized by the spectral curve.
We generalize the flow on spectral data introduced in \cite{HeHeSch} 
and prove the short time existence of this flow for certain families of initial surfaces. In this way we obtain various families of new
CMC surfaces of higher genus with prescribed branch points and prescribed umbilics.
 \end{abstract}
  
 \maketitle

%%%%%%%%%%%%%%%%%%%%%%%%%%%%%%%%%%%%%%%%%%%%%%%%%%%%%%%%%%%%%%%%%%%%%%%%%%%%%%
%           Introduction                                                     %
%%%%%%%%%%%%%%%%%%%%%%%%%%%%%%%%%%%%%%%%%%%%%%%%%%%%%%%%%%%%%%%%%%%%%%%%%%%%%%

\section{Introduction}
\label{sec:intro}

Surfaces of constant mean curvature are stationary points of the area functional under a volume constraint. Dropping this constraint yield minimal surfaces - those of zero mean curvature -  which are in  Euclidean space never compact as a consequence of the harmonicity of the conformal immersion. In this paper we are interested in compact CMC surfaces in the round $3$-sphere. These surfaces are related to CMC surfaces in other space forms by the Lawson correspondence \cite{L}.

In analogy to euclidean minimal surfaces, CMC immersions  from simply connected domains into space forms can be parametrized via a generalized Weierstrass representation \cite{DPW}.  But global questions concerning the construction of closed surfaces of genus $g\geq2$ are very hard to tackle in this setup as the space of conformal
 CMC immersions of a disc $D$ or a plane is always infinite dimensional. Searching for periodic CMC immersions
 which factor through a compact quotient $D\to D/\Gamma=M$ of genus $g\geq2$ is worse than searching for a needle in a haystack, in particular because
 the Fuchsian group $\Gamma$ is non-abelian.
 In contrast, the abelian torus case  is comparably well understood by applying integrable systems methods introduced in the work of  Abresch \cite{Ab}, Pinkall and Sterling \cite{PiSt}, Hitchin \cite{Hi} and Bobenko \cite{B}  in the 1980s.
These methods were used to produce various new examples of CMC tori.
Recently Hauswirth, Kilian and Schmidt studied the moduli space of all
minimal tori in $S^2\times \R$ by integrable systems methods. They proved that properly embedded minimal annuli
in $S^2\times\R$ are foliated by circles \cite{HaKiSch, HaKiSch1}.
 The technique (based on the so-called Whitham deformation of the spectral data) 
 has also been applied for the investigation
of Alexandrov embedded CMC tori in $S^3$ \cite{HaKiSch2}, giving an alternative approach to the conjectures
of Lawson (proved by Brendle \cite{B}) and  Pinkall-Sterling (proved by Andrews and Li \cite{AL} using Brendle's method).

The situation is completely different for compact  higher genus CMC surfaces, where only few examples are known. Lawson \cite{L} constructed closed embedded minimal surfaces in the round 3-sphere for every genus and
Kapouleas \cite{Ka1,Ka2} showed the existence of compact CMC  surfaces in Euclidean 3-space for all genera.
All known examples have been constructed implicitly, and even fundamental geometric properties like the area
cannot be explicitly computed.

In the integrable system approach to CMC surfaces  a
$\C^*$-family of flat 
$\SL(2,{\C})$-connections $\lambda\in \C^*\mapsto\nabla^\lambda$ is associated to the immersion. 
Knowing the family of flat connections is equivalent to knowing the CMC surface itself, as the surface is given by the gauge
between two trivial connections $\nabla^{\lambda_1}$ and $\nabla^{\lambda_2}$
 for 
$\lambda_1\neq\lambda_2\in S^1\subset\C^*$  with mean curvature $H=i\frac{\lambda_1+\lambda_2}{\lambda_1-\lambda_2}.$ By construction this family of flat connections is unitary for all $\lambda \in S^1$. 
For tori, due to their abelian fundamental group, $\nabla^\lambda$ splits for generic $\lambda\in\C^*$ into a direct sum of flat line bundle connections. 
Thus for a CMC torus the associated $\C^*$-family of flat $\SL(2,\C)$-connections 
is given by spectral data parametrizing the corresponding family of flat line bundles on the torus. If the closed CMC surface has higher genus the associated flat $\SL(2,\C)$-connections are generically irreducible and therefore 
the abelian spectral curve theory for CMC tori is no longer applicable.
Nevertheless,  for Lawson symmetric  CMC surfaces of genus $2$ \cite{He3} it is still possible to  characterize flat symmetric 
$\SL(2,\C)$-connections in terms of flat line bundle connections
via abelianization  of the corresponding flat connections. 
Hence, the associated family $\nabla^{\lambda}$ 
of a symmetric CMC surface of higher genus is again
determined by spectral data which parametrize flat line bundle connections on a torus. But the unitarity condition for $\nabla ^{\lambda}$ along the unit circle $\lambda \in S^1$ is only given implicitly in terms of the Narasimhan-Seshadri section. 
In order to find a way of  constructing spectral data satisfying the unitarity condition, a flow on the spectral data was introduced in \cite{HeHeSch} restricting to surfaces corresponding to Fuchsian systems on a $4$ punctured sphere with the same pole behavior at each singularity. The flow is designed to deform a known initial CMC surface in a direction which "continuously" change the genus of the surface but preserves the constant mean curvature property
as well as certain intrinsic and extrinsic closing conditions. For rational times the deformed surface closes to a compact (but possibly branched) CMC surface and reaches an immersed CMC surface at integer times. As a corollary of the short time existence of the flow for certain initial values new families of compact branched CMC surfaces in $S^3$ are obtained. The purpose of this paper is to generalize the results in \cite{He3} and \cite{HeHeSch} to a more general class of surfaces, namely those surfaces of genus $g$ with commuting $\Z_{k+1}$ and  $\Z_{l+1}$ symmetries of genus
 $g = k\cdot l.$ Examples of surfaces with this properties are Lawson's minimal surfaces $\xi_{k,l},$ see \cite{L}. Figure \ref{fig:lawson} illustrates the flow from the Lawson surface of genus $2$, $\xi_{2,1},$ to the Lawson surface of genus $4$, $\xi_{2,2}.$  It differs slightly from the proposed flow which 
 goes from the Clifford torus directly to the Lawson surface $\xi_{2,2}.$ Instead of flowing into the $(k, l)$-direction, we go first from the Clifford torus $\xi_{1,1}$ to the Lawson surface $\xi_{2,1}$ and from there to $\xi_{2,2}.$  But it should be mentioned that the flows into the
 $k$ and into the $l$ direction are locally commuting where they exist (see also Remark \ref{3d}), and therefore the picture is still adequate to illustrate our theory.

The paper is organized as follows: We first describe the integrable systems setup for CMC surfaces in which the non linear PDE $H \equiv const$ is translated into  a system of linear ODEs, namely a $\C^*$-family of flat $SL(2, \C)$-connections and how to reconstruct a CMC surface from a given family of connections. Then we describe the Riemann surface structures of $(k,l)$-symmetric surfaces. In Section \ref{Abelianization} we summarize the abelianization procedure for Fuchsian systems on a $4$-punctured sphere carried out in \cite{HeHe} and show how to relate a $\C^*$-family of Fuchsian systems 
satisfying certain properties to the associated family of flat connections of a CMC surface with boundary. Under some rationality condition the analytic continuation of the CMC surface close to a compact (and possibly branched) CMC surface with $(k,l)$-symmetry (Section  \ref{Rational_weights}). 
In Section  \ref{whitham flow} we first describe the spectral data for the CMC tori which are the initial points of our flow.
Then we define the flow on these spectral data for every direction $q\in \R$. The short time existence of the flow  follows immediately from \cite{HeHeSch}. Thus for initial data given by homogeneous CMC tori we obtain a unique flow for every $q \in \R$. In particular, we have that flowing the Clifford torus in direction $q\in \Q$ is equivalent to the flow of Plateau solutions by changing 
the angle of the fundamental piece in Lawson's construction. The flow reaches the minimal surfaces $\xi_{k,l}$ for $q = \tfrac{(2k+2)(l-1)}{(2l+2)(k-1)}.$
Further, for every direction $q\in\mathbb R,$ there are two distinct flows starting at the  
2-lobed Delaunay tori of spectral genus $1$. For every initial surface, each direction $q\in \Q$ and every rational time, we obtain thus a closed CMC surface with prescribed branch points and umbilics.

\section{CMC Surfaces via Integrable Systems Methods}
We consider conformal immersions $f\colon M \to S^3$ from a compact Riemann surface to the round $3$-sphere with constant mean curvature $H$. In integrable surface theory the elliptic PDE $H \equiv const.$ is studied via a system of linear ODEs on the Riemann surface, namely a $\C^*$-family $\nabla^{\lambda}$ of flat $\SL(2, \C)$ connections on the trivial $\C^2$ bundle over $M$, see \cite{Hi} or \cite{B, He1}. 

\begin{The}[\cite{Hi,B}]
\label{The1}
Let $f\colon M\to S^3$ be a conformal CMC immersion. Then there exists
an associated family of flat $\SL(2,\C)$-connections
\begin{equation}\label{associated_family}
\lambda\in\C^*\mapsto \nabla^\lambda=\nabla+\lambda^{-1}\Phi-\lambda\Phi^*
\end{equation}
on a hermitian rank $2$ bundle $V\to M,$ where $\Phi$ is a nilpotent and nowhere vanishing
complex linear $1$-form and $\Phi^*$ is its
adjoint. The family $\nabla^\lambda$
is unitary along $ S^1\subset\C^*$ and trivial for
$\lambda_1\neq\lambda_2\in  S^1$.  \\

Conversely, given such a family of flat connections,  the gauge between the trivial $SU(2)$ connections
$\nabla^{\lambda_1}$ and $\nabla^{\lambda_2}$ is an immersion into $S^3=\SU(2)$ of constant mean curvature $H=i\frac{\lambda_1+\lambda_2}{\lambda_1-\lambda_2}.$ \end{The}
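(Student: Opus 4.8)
The plan is to prove both directions by translating between the classical moving-frame data of the surface and the three structure equations hidden in the flatness of \eqref{associated_family}. For the forward direction I would start from a moving frame of $f\colon M\to S^3=\SU(2)$ and extract its fundamental data: the conformal factor of the induced metric, the Hopf differential $Q$ (the $(2,0)$-part of the second fundamental form), and the constant $H$. On the trivial $\C^2$-bundle $V\to M$, equipped with the hermitian metric pulled back from the frame, I would take $\nabla$ to be the unitary connection induced by the Levi--Civita connection and define $\Phi\in\Omega^{1,0}(M,\End V)$ as the $\End$-valued $(1,0)$-form encoding $\del f$. Since $f$ is an immersion, $\del f$ has complex rank one everywhere, so $\Phi$ is nowhere vanishing and nilpotent ($\Tr\Phi=0$, $\det\Phi\equiv0$), and $\Phi^*$ is its hermitian adjoint, a $(0,1)$-form.

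The core of the forward direction is to check flatness of $\nabla^\lambda=\nabla+\lambda^{-1}\Phi-\lambda\Phi^*$ for every $\lambda\in\C^*$. Writing $A=\lambda^{-1}\Phi-\lambda\Phi^*$ and using $\Phi\wedge\Phi=\Phi^*\wedge\Phi^*=0$ (each is a product of forms of pure type $dz\wedge dz$, resp.\ $d\bar z\wedge d\bar z$), one finds
\[
F^{\nabla^\lambda}=F^\nabla+\lambda^{-1}d^\nabla\Phi-\lambda\, d^\nabla\Phi^*-[\Phi\wedge\Phi^*].
\]
As this must vanish identically in $\lambda$, the coefficients of $\lambda^{\mp1}$ give $d^\nabla\Phi=0$ and $d^\nabla\Phi^*=0$ (i.e.\ $\Phi$ is holomorphic for the holomorphic structure $\dbar^\nabla$), while the $\lambda^0$-coefficient gives $F^\nabla=[\Phi\wedge\Phi^*]$. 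The step I would then carry out is to match these three equations to the integrability conditions of the surface: the curvature identity is the Gauss equation, and the holomorphicity of $\Phi$ is the Codazzi equation together with $H\equiv\mathrm{const}$ (equivalently, holomorphicity of $Q$); in short, the existence of a whole $\C^*$-family of flat connections of this shape is equivalent to harmonicity of the conformal Gauss map, hence to $H$ being constant. The reality statement is immediate: along $|\lambda|=1$ one has $\bar\lambda=\lambda^{-1}$, so $A^*=\bar\lambda^{-1}\Phi^*-\bar\lambda\Phi=\lambda\Phi^*-\lambda^{-1}\Phi=-A$; as $\nabla$ is metric, $\nabla^\lambda$ is unitary for $\lambda\in S^1$. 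Triviality at two Sym points $\lambda_1,\lambda_2\in S^1$ I would obtain from the fact that the unitary frame of the immersion and that of its normal translate provide single-valued gauges trivializing $\nabla^{\lambda_1}$ and $\nabla^{\lambda_2}$, which is consistent because $S^3$ is simply connected.

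For the converse, assume given a family as in \eqref{associated_family} that is flat, unitary on $S^1$, and trivial at $\lambda_1\neq\lambda_2\in S^1$. Triviality provides $\SU(2)$-valued parallel frames $F_1,F_2$ of $\nabla^{\lambda_1},\nabla^{\lambda_2}$, and I would set $f=F_1F_2^{-1}\colon M\to\SU(2)=S^3$, first checking that it descends from the universal cover since the monodromies of the two trivial connections agree. Conformality and the immersion property are pointwise, so I would verify them at an arbitrary point with the frames normalized to $\Id$ there: the Maurer--Cartan form of $f$ then equals $-(\lambda_1^{-1}-\lambda_2^{-1})\Phi+(\lambda_1-\lambda_2)\Phi^*$, whose $(1,0)$- and $(0,1)$-parts are nonzero multiples of $\Phi$ and $\Phi^*$. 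Because $\Phi$ is nilpotent---so that $\Phi^2=0$ annihilates the $(2,0)$- and $(0,2)$-parts of the pullback metric---and nowhere vanishing, $f$ is a conformal immersion. Reading off the unit normal and the second fundamental form then yields constant mean curvature, and the explicit computation pins its value to $H=i\frac{\lambda_1+\lambda_2}{\lambda_1-\lambda_2}$.

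The main obstacle is precisely this dictionary. In the forward direction the delicate point is organizing the frame so that $\nabla,\Phi,\Phi^*$ split off cleanly and matching $F^\nabla=[\Phi\wedge\Phi^*]$ and $d^\nabla\Phi=0$ to Gauss--Codazzi on the nose; in the converse, it is extracting the induced metric, the unit normal, and the second fundamental form from the product $F_1F_2^{-1}$ and confirming both that $H$ is constant and that its value is exactly $i\frac{\lambda_1+\lambda_2}{\lambda_1-\lambda_2}$, independently of the base point and of the choices of parallel frames. The computations are routine once a frame is fixed; the care lies in keeping the two normalizations---the Sym points $\lambda_1,\lambda_2$ versus the value of $H$---consistent.
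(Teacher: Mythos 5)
Your proposal is correct and follows essentially the standard argument of Hitchin and Bobenko, which the paper itself only cites without reproducing: flatness in all of $\C^*$ decomposes by powers of $\lambda$ into Gauss--Codazzi plus $H\equiv\mathrm{const}$, unitarity on $S^1$ follows from $\bar\lambda=\lambda^{-1}$, and the converse is the Sym--Bobenko construction $f=F_1F_2^{-1}$ with the stated mean curvature. The only cosmetic slip is the phrase ``conformal Gauss map'' (Willmore terminology) where the harmonicity of the ordinary Gauss map of the CMC immersion is meant; this does not affect the argument.
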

\begin{Rem}
The unitarity condition for $\nabla^{\lambda}$ for $\lambda \in S^1$ is necessary to obtain a CMC map into $S^3=SU(2),$ and the
fact that these connections are well-defined on the surface is
equivalent to the intrinsic closing condition, since it yields a well-defined metric on $M$ which gives rise to a solution of the Gauss-Codazzi equations. The Sym point condition or the extrinsic closing condition is that there exist $\lambda_1, \lambda_2 \in S^1$ such that the connections $\nabla^{\lambda_i}$ have trivial monodromy. The surface closes on a covering of $M$ if the monodromy at the Sym points is a rotation by a rational angle.
\end{Rem}

\begin{The}[\cite{Hi, He1, Ge}]
The $\C^*$-family of flat $SL(2, \C)$-connections $\nabla^{\lambda}$ is trivial for all $\lambda$ if and if the surface is totally umbilic. Further, the family of connections $\nabla^{\lambda}$ is reducible for all $\lambda$ if and only if the surface is a (branched covering of a) CMC torus.
\end{The}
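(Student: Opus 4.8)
The plan is to translate both equivalences into statements about the monodromy representations $\rho_\lambda\colon\pi_1(M)\to\SL(2,\C)$ of the family $\nabla^\lambda$, and to exploit that $f$ is recovered from $\nabla^\lambda$ (Theorem \ref{The1}), so that algebraic degeneracies of $\rho_\lambda$ correspond to geometric degeneracies of $f$. Throughout I use that flatness of $\nabla^\lambda$ for all $\lambda$ is equivalent, by collecting powers of $\lambda$, to $\dbar^\nabla\Phi=0$ (so $\Phi$ is holomorphic), the adjoint equation $\del^\nabla\Phi^*=0$, and the Gauss equation $R^\nabla=[\Phi\wedge\Phi^*]$; and that the umbilics of $f$ are exactly the zeros of the Hopf differential, a holomorphic quadratic differential $Q$, with totally umbilic meaning $Q\equiv 0$.

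First I would prove the reducibility dichotomy, which carries the main content. For the easy direction, if $f$ factors as $f=\bar f\circ\pi$ through a branched covering $\pi\colon M\to T^2$ of a CMC torus $\bar f\colon T^2\to S^3$, then $\nabla^\lambda=\pi^*\bar\nabla^\lambda$; since $\pi_1(T^2)\cong\Z^2$ is abelian and two commuting elements of $\SL(2,\C)$ always share an eigenvector, the monodromy of $\bar\nabla^\lambda$ is reducible, and reducibility is inherited by the pullback, the invariant line extending across the branch points. For the hard direction, reducibility for all $\lambda$ produces a $\nabla^\lambda$-invariant line subbundle $L_\lambda\subset V$. A first observation constrains it: in a frame adapted to $L_\lambda$ the full connection, hence each coefficient $\nabla$, $\Phi$, $\Phi^*$ separately, is upper triangular; since $\Phi$ is nilpotent and nowhere vanishing while $\Phi^*$ is its adjoint, $L_\lambda$ cannot be independent of $\lambda$. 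Thus $\lambda\mapsto L_\lambda$ is a genuinely moving eigenline which, using holomorphic dependence on $\lambda\in\C^*$ together with the flatness equations, organizes into the spectral data of an abelianized connection: a spectral curve over $\C^*$ carrying a family of flat line bundle connections. I would then identify these data with those of a CMC torus and integrate the abelianized flat connection to a developing map exhibiting $M$ as a branched covering of that torus.

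The totally umbilic equivalence I would deduce from this analysis together with a degree count. If $f$ is totally umbilic then $Q\equiv0$, and the only compact totally umbilic CMC surfaces in $S^3$ are round $2$-spheres, so $M\cong S^2$ is simply connected and every flat connection, in particular each $\nabla^\lambda$, has trivial monodromy and is gauge trivial. Conversely, triviality for all $\lambda$ implies reducibility for all $\lambda$, so by the dichotomy the spectral data are present but must degenerate; to see this I would analyze the monodromy as $\lambda\to0$, where $\lambda^{-1}\Phi$ dominates and the leading behaviour of $\rho_\lambda$ along a cycle $\gamma$ is governed by the periods $\oint_\gamma\Phi$. Triviality forces these periods to vanish for every cycle, and since $\Phi$ is a nowhere-vanishing holomorphic form this is incompatible with a nontrivial homology, forcing $g=0$, hence $Q\equiv0$ and $f$ totally umbilic.

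The main obstacle is the hard direction of the reducibility statement: passing from the formal, pointwise existence of the invariant lines $L_\lambda$ to the global geometric conclusion, namely constructing the honest CMC torus together with the branched covering $M\to T^2$. This requires promoting the moving eigenline to genuine spectral data of finite type and controlling the monodromy and periods of the abelianized line bundle connections so that they integrate to a globally defined harmonic map into a torus; this is exactly the finite-type/spectral-curve mechanism of \cite{Hi, PiSt}, and is where the analytic work sits. A secondary point to address is that the totally umbilic surface is, strictly speaking, the trivial-monodromy degeneration that lies inside, but is not an honest branched cover of, the torus stratum (Riemann--Hurwitz forbids $S^2\to T^2$), so the two equivalences are best read as the stratification trivial $\subset$ reducible $\subset$ generic of the family by the algebraic type of $\rho_\lambda$.
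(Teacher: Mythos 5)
The paper offers no proof of this statement: it is quoted verbatim from \cite{Hi, He1, Ge}, so there is no internal argument to compare against. Judged on its own terms, your outline gets the easy directions right (commuting elements of $\SL(2,\C)$ share an eigenvector, so a branched cover of a torus pulls back reducible monodromy; a compact totally umbilic CMC surface is a round sphere, on which every flat connection is gauge trivial), and your closing remark that the two equivalences cannot be read as nested strata --- a trivial family is in particular reducible, yet $S^2$ admits no branched covering onto $T^2$ --- is a genuine and worthwhile observation about how the statement must be parsed.

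Both hard directions, however, contain real gaps. For ``reducible for all $\lambda$ implies branched cover of a CMC torus'' you correctly rule out a $\lambda$-independent invariant line and then defer to ``the finite-type/spectral-curve mechanism of \cite{Hi, PiSt}''; but that machinery presupposes that the domain is already a torus, whereas here the entire content is to manufacture the target torus $T^2$ and the holomorphic branched covering $\pi\colon M\to T^2$ out of a higher-genus $M$. Integrating a flat line bundle connection on $M$ only produces a character of $\pi_1(M)$, not a developing map to a torus; some further mechanism is needed to show that $\Phi$, and hence the immersion itself, factors through a quotient torus, and this is precisely the work done in \cite{He1, Ge} that your sketch omits. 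In the umbilic direction, the asymptotic analysis as $\lambda\to 0$ is not correct as stated: since $\Phi$ is nilpotent and $\End$-valued, the leading behaviour of the logarithm of the monodromy eigenvalues is not $\lambda^{-1}\oint_\gamma\Phi$ (which is not even a well-defined scalar) but is of order $\lambda^{-1/2}$, governed by periods of a differential living on the spectral curve obtained from a square root mixing $\Phi$ with the $(1,0)$-part of $\nabla$; and the concluding step --- that vanishing of these leading coefficients for every cycle forces $g=0$ --- is asserted rather than derived, conflating the nilpotent matrix-valued $\Phi$ with a holomorphic $1$-form on $M$. Both gaps sit exactly where the cited references do their work, so the proposal is a reasonable road map but not a proof.
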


The theorem explains why integrable systems methods have only been very successful in the study of CMC tori. For $M$ being a torus
 $\nabla^{\lambda}$ splits generically into the direct sum of two line bundle connections over $M$. To be more concrete: For generic $\lambda_0 \in \C_*$ there is a neighborhood of $\lambda_0$ and a splitting $\underline \C^2 = L_\lambda^+ \oplus L_\lambda^-$ such that 
 $\nabla^{\lambda}$ is gauge equivalent to
\[ d + \begin{pmatrix} -\chi(\lambda) d\bar w + \alpha(\lambda) dw & 0 \\ 0 & \chi(\lambda) d\bar w - \alpha(\lambda) dw \end{pmatrix},\]
where $dw$ is the holomorphic $1$-form on the torus $M$ and $\chi$ and $\alpha$ are holomorphic functions in $\lambda$ parametrizing the induced holomorphic and anti-holomorphic structures of $L_\lambda.$ It is shown in \cite{Hi} that the splitting fails only at finitely many points $\lambda_i \in \C_*$, where the eigenlines of the monodromy $L_\lambda^\pm$ coalesce. This can only happen if the connections on $L_{\lambda_i}^\pm$ are self-dual, i.e., if the holomorphic line bundle $L_{\lambda_i}$ over $M$ is a spin bundle. By replacing the parameter plane $\C^*$ by a double covering $\Sigma$ of $\CP^1$ branched at the exceptional $\lambda_i$ and at $\lambda= 0, \lambda= \infty,$ globally defined holomorphic maps $\chi$ and $\alpha$ can be obtained \cite{Hi}. The (compact) hyper-elliptic Riemann surface $\Sigma$ is called the spectral curve of the CMC torus. Note that by demanding $\nabla^{\lambda}$ to be unitary on the unit circle, 
the spectral curve $\Sigma$ inherits a real involution covering $\lambda\mapsto\bar\lambda^{-1}$ and
the holomorphic map $\alpha$ is already determined by the map $\chi.$ As was shown in \cite{Hi} the family of 
flat $SL(2, \C)$-connections $\nabla^{\lambda}$ and hence the corresponding CMC torus can be reconstructed from the holomorphic data
$(\chi,E)$ on $\Sigma,$ where $E$ is a point in the Picard variety of $\Sigma$ determined by evaluating the parallel eigenlines
$L_\xi, \xi\in\Sigma$
at a fixed $p\in T^2.$

If $M$ is of higher genus $g\geq 2$, then we cannot obtain interesting examples by looking at families of reducible connections. Thus there is no straight forward generalization of the theory developed in \cite{Hi, PiSt} to higher genus surfaces. Nevertheless, integrable systems methods can be applied \cite{He1,He2, He3, HeHeSch} to obtain a better understanding of higher genus CMC surfaces.
 It has been proven very useful to first determine only the gauge equivalence classes $[\nabla^{\lambda}]$ of the connections. Thus let 
$\mathcal A^2=\mathcal A^2(M)$  be the moduli space of flat $\SL(2,\C)$-connections modulo gauge transformations.  The space $\mathcal A^2$ inherits the structure of a complex analytic variety of dimension $6g-6$ whose singularity set consists of the gauge classes of reducible connections \cite{G, Hi1}. Note that a necessary condition for the Sym point condition to hold is that the corresponding $\nabla^{\lambda_i}$ are reducible.

For a CMC surface $f$ with associated family of flat connections $\nabla^{\lambda}$ consider the map
\[\mathcal D\colon \C^* \to\mathcal A^2,\; \lambda\mapsto[\nabla^\lambda].\]
The map $\mathcal D$ does not uniquely determine a CMC surface in $S^3$ in general but those CMC surfaces
corresponding to the same $\mathcal D$ are related by a well understood transformation called dressing, see \cite{He3}. In the case where $\nabla^{\lambda}$ is generically irreducible, there are only finitely many dressing transformations (Theorem 7 in  \cite{He3}), since they correspond to reducible connections in the family $\nabla^{\lambda}$. For CMC tori, these dressing transformations are
generically the isospectral deformations induced by a shift of the eigenline bundle, see \cite{McI}. 

\begin{The}\label{lifting_theorem}
Let $M$ be a Riemann surface and let $\mathcal D\colon\C^*\to\mathcal A^2(M)$ be a holomorphic map such that

\begin{enumerate}
\item the unit circle $S^1\subset\C^*$ is mapped
into the real analytic subvariety consisting of gauge equivalence classes of unitary flat connections,
\item around $\lambda=0$ there exists a  local lift $\tilde\nabla^\lambda$ of $\mathcal D$ with an expansion $$\tilde\nabla^\lambda\sim\lambda^{-1}\Psi+\tilde\nabla^0 +  \text{Êhigher order terms in } \lambda$$ for a nilpotent $\Psi\in\Gamma(M,K\End_0(V)),$
\item there are two distinct points
$\lambda_{1,2}\in S^1\subset\C^*$ such that $\mathcal D(\lambda_k)$ represents the trivial gauge class,
\end{enumerate}
then
there exists a (possibly branched) CMC surface $f\colon M\to S^3$ inducing the map $\mathcal D$. Further, 
the surface $f$ is unique up to dressing transformations and
the branch points of $f$ are the zeros of $\Psi.$
\end{The}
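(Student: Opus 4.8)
The plan is to reduce everything to the converse half of Theorem~\ref{The1}. That theorem takes as input a genuine $\C^*$-family of flat connections of the special shape $\nabla^\lambda=\nabla+\lambda^{-1}\Phi-\lambda\Phi^*$, unitary on $S^1$ and trivial at two points of $S^1$, and returns the surface as the gauge between the two trivial connections. Hence it suffices to \emph{lift} the given map $\mathcal D$ to such a family: a holomorphic family $\lambda\mapsto\nabla^\lambda$ on the fixed trivial bundle $V$ with $[\nabla^\lambda]=\mathcal D(\lambda)$, of the above form, inheriting the unitarity (1) and the two trivial points (3). Granting the lift, Theorem~\ref{The1} produces $f$ with $H=i\tfrac{\lambda_1+\lambda_2}{\lambda_1-\lambda_2}$, and since we only allow $\Phi=\Psi$ to have zeros rather than demanding it be nowhere vanishing, the immersion is branched exactly at the zeros of $\Psi$. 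So the entire content is the construction of the lift and the analysis of its ambiguity.

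First I would lift near the two ends. Condition (2) already furnishes a local holomorphic lift near $\lambda=0$ with connection form $\lambda^{-1}\Psi+A_0+O(\lambda)$ and nilpotent residue $\Psi\in\Gamma(M,K\End_0(V))$. Condition (1) supplies the reality structure: writing a connection as $d+A$, the assignment $\sigma\colon[d+A]\mapsto[d-A^*]$, with $A^*$ the hermitian adjoint with respect to the metric of $V$, is an antiholomorphic involution of $\mathcal A^2$ whose fixed-point set is the unitary locus, and (1) says precisely that $\mathcal D$ intertwines $\lambda\mapsto\bar\lambda^{-1}$ with $\sigma$. Applying $\sigma$ to the lift near $\lambda=0$ therefore produces a local lift near $\lambda=\infty$ with leading term $-\lambda\Psi^*$, so that both polar parts are pinned down, the residue at $\infty$ being the adjoint of the residue at $0$.

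Next I would globalise over the annulus joining the two ends. On the open set where $\nabla^\lambda$ is irreducible, the projection from flat connections to $\mathcal A^2$ is, in a local slice, a principal bundle whose structure group is the residual gauge group, and for irreducible connections this is only the centre $\{\pm\Id\}$. Thus $\mathcal D$ admits local lifts, unique up to sign and up to $\lambda$-dependent gauge, which can be glued to the two end lifts, the sole monodromy being the resulting $\Z_2$. The decisive point is that the glued family extends holomorphically across the finitely many reducible $\lambda_i$, precisely the candidates for the Sym points, where the slice degenerates, without creating higher-order poles. Once a single-valued holomorphic lift on all of $\C^*$ is secured, its connection form is holomorphic on $\C^*$ with at worst simple poles at $0$ and $\infty$, so its Laurent expansion is forced to equal $\lambda^{-1}\Phi+A_0-\lambda\Phi^*$ with $\Phi=\Psi$; evaluating the reality condition $\sigma$ at order $\lambda^0$ gives $A_0=-A_0^*$, that is $A_0=\nabla-d$ for a metric connection $\nabla$. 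This is exactly the form required by Theorem~\ref{The1}, with unitarity on $S^1$ and the two trivial points inherited from (1) and (3), so Theorem~\ref{The1} delivers $f$, branched at the zeros of $\Psi$.

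For uniqueness, two lifts inducing the same $\mathcal D$ differ fibrewise by a gauge transformation; assembling these into a $\lambda$-dependent gauge that is holomorphic on $\C^*$ and $\sigma$-compatible exhibits the ambiguity as a dressing transformation in the sense of \cite{He3}, giving uniqueness up to dressing. I expect the globalisation step to be the main obstacle: controlling the $\Z_2$ sign-monodromy of the lift and proving holomorphic extension of the patched family across the reducible $\lambda_i$ while keeping the poles at $0$ and $\infty$ of first order. It is the normalisation at $\lambda=0$ coming from (2) together with the reality condition from (1) that make the pole degrees match, so that the Laurent argument closes; verifying that no spurious poles appear at the $\lambda_i$ is the delicate part.
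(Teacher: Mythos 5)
The paper does not actually prove Theorem~\ref{lifting_theorem}; it cites \cite{He3} for the generically irreducible case and \cite{Hi} for the reducible one. Your strategy---lift $\mathcal D$ to an honest family $\nabla^\lambda=\nabla+\lambda^{-1}\Phi-\lambda\Phi^*$ and feed it to the converse direction of Theorem~\ref{The1}---is indeed the strategy of the cited proof. But as written your argument is an outline whose decisive steps are exactly the ones you defer, so it does not yet constitute a proof.

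Concretely: (i) Your globalisation presupposes that the reducible fibres form a finite set, which is not among the hypotheses of Theorem~\ref{lifting_theorem}; if $\nabla^\lambda$ is reducible for \emph{all} $\lambda$ (the torus case) the slice/principal-bundle picture with structure group $\{\pm\Id\}$ breaks down everywhere, and one must fall back on Hitchin's eigenline construction. Your proof silently restricts to the generically irreducible case without saying so. (ii) The extension of the patched lift across the reducible $\lambda_i$ with only first-order poles at $0$ and $\infty$ is the technical heart of \cite{He3}, and you only assert it. The mechanism that actually controls the pole orders is the one used in the proof of Theorem~\ref{slitting_tori} in this paper: two lifts of the same $\mathcal D$ differ by a family of Higgs fields, and the determinant of that family (a holomorphic function of $\lambda$) bounds the order of the singularity; without some such quantitative input, nothing prevents the glued lift from acquiring essential singularities or higher-order poles at the $\lambda_i$. (iii) The uniqueness claim is too quick: at an irreducible fibre the stabiliser is $\pm\Id$, but at the reducible fibres it is larger, and it is precisely these fibres that produce the finitely many nontrivial dressing transformations (Theorem~7 of \cite{He3}); ``assembling the fibrewise gauges into a holomorphic $\lambda$-dependent gauge'' is exactly the statement that needs proof, not a consequence of the fibrewise statement. (iv) Finally, condition (1) gives unitarity of the gauge \emph{class} on $S^1$; to apply Theorem~\ref{The1} you must produce a lift that is genuinely unitary on the fixed hermitian bundle along $S^1$, i.e.\ show that the $\lambda$-dependent unitarizer can be chosen real-analytically and absorbed into the lift compatibly with the holomorphic structure on the disc. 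None of these points is fatal to the approach---they are all resolved in \cite{He3}---but each is a genuine gap in the proposal as it stands.
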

This theorem was proven for generically irreducible connections in \cite{He3} and in the case of reducible connections it follows from \cite{Hi}.
\begin{Rem}
 Since $\mathcal D$ maps into the real analytic subvariety consisting of gauge equivalence classes of unitary flat connections
 along the unit circle,  $\mathcal D$ is already determined by its induced family of gauge classes of holomorphic structures 
 $(\nabla^\lambda)''$ on the closed unit disc centered at $\lambda= 0$
by the Schwarzian reflection principle. \end{Rem}

\section{$(k,l)$-symmetric Riemann surfaces}\label{kl-RS}

  We consider compact CMC surfaces of genus $g=k\cdot l$ equipped with
commuting $ \Z_{k+1}$ and $ \Z_{l+1}$ symmetries preserving the orientation of the surfaces and the ambient space.
 Moreover, we assume that $\Z_{k+1}$ has $2l+2$ fix points, and $\Z_{l+1}$ has $2k+2$ fix points, all of total branch order $k$ and $l$ respectively. Examples
 of $(k,l)$-symmetric CMC surfaces are provided by Lawson \cite{L} - the Lawson minimal surfaces $\xi_{k,l}.$
 
 We want to compute the Riemann surface structure $M$ as well as the Hopf differential of  $(k,l)$-symmetric CMC surfaces.
Note that the Riemann surfaces $M/ \Z_{k+1}$ and $M/ \Z_{l+1}$ are both the Riemann sphere
 as a consequence of the Riemann-Hurwitz formula. Let us denote
 the fixed points of the $\Z_{k+1}$-action by $Q_1,..,Q_{2l+2}$ and the fixed points of $\Z_{l+1}$ by $P_1,..,P_{2k+2}.$ Then
 $\CP^1=M/ \Z_{k+1}$ has $2l+4$ marked points,
 the images of $Q_1,..,Q_{2l+2}$ which we denote by the same symbols again, and the images $p_+,p_-$ of the set $\{P_1 , .. , P_{2k+2}\}.$
 That we have exactly two points $p_+$ and $p_-$ follows from the fact that $\Z_{k+1}$ and $\Z_{l+1}$
 are commuting. Another consequence of this assumption is that $\Z_{l+1}$ acts on $\CP^1=M/ \Z_{k+1}$ with fix points $p_+,p_-,$ and such that the points
 $Q_1,..,Q_{2l+2}$ lie in exactly two orbits of the $\Z_{l+1}$-action. 
 
 Conversely, to obtain a $(k,l)$-symmetric Riemann surface we start with the Riemann sphere with 4 marked points,  which we can assume without loss of generality
 to be $p_+=0,$ $p_-=\infty,$ and $q_+=1$, $q_-=m\in\C.$
 Then we take the $(l+1)$-fold covering\[\CP^1\to\CP^1,\,\,\, z\mapsto z^{l+1},\]
 such that the preimages of $q_+,q_-$ are $2l+2$ points which we denote by $Q_1,..,Q_{2l+2}\in\CP^1(=M/ \Z_{k+1}).$ We define $M$  to be the $(k+1)$-fold covering of this  $\CP^1$ with $2l+4$ marked points as indicated in the  following diagram:

\begin{xy}\label{diag:MP1P1}
\hspace{5.0 cm}
  \xymatrix{
              M \ar[d]^{\mod \Z_{k+1}}  \\
           M/ \Z_{k+1}=\CP^1 \ar[d]^   {\mod \Z_{l+1}\cong\, z\mapsto z^{l+1}} \\
             (M/ \Z_{k+1})/\Z_{l+1}=\CP^1   . }
\end{xy}

The Hopf differential $Q$ of a $(k,l)$-symmetric CMC surface is invariant under $\Z_{k+1}$ as well as $\Z_{l+1}.$ 
Hence it is the pull-back of a meromorphic quadratic differential
on $\CP^1$ with at most simple poles at $z=0,1,m,\infty$ and no other poles. Since the CMC surface is not totally umbilic,  $Q$ is the pull-back of
 a non-zero multiple of
\[\frac{(dz)^2}{z(z-1)(z-m)}.\]

By similar arguments it is possible to prove that its associated $\C^*$-family of flat 
connections can be pushed down
to a family of singular connections on the $4$-punctured sphere. This has been carried out in detail for the Lawson surface $\xi_{2,1}$ 
in \cite{He2}. Point-wise in $\lambda$ it can also be deduced from \cite{Bis2}.
\begin{Lem}\label{Solving-an-ODE}
Consider the $(k+1)(l+1)$-fold covering $h\colon M\to\CP^1$ branched over 4 points $0,1,m$ and $\infty \in \CP^1.$ Assume as above  that
$h$ branches over $0$ and $\infty$ with order $l,$ and over $1,m$ with order $k.$ 
Then, the pull-back by $h$ of the connection
\[ d\pm \frac{n_0}{l+1}\frac{dz}{z}\pm \frac{n_1}{k+1}(\frac{dz}{z-1}-\frac{dz}{z-m})\]
has trivial monodromy on $M$ for all $n_0, n_1\in\Z.$
\end{Lem}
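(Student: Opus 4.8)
The plan is to make the covering $h$ completely explicit as an algebraic curve and then exhibit the pullback connection as a gauge transform of the trivial connection. Following the description of $(k,l)$-symmetric surfaces in Section \ref{kl-RS}, I would first record a global model for $M$. Writing $w$ for the coordinate on the middle sphere $M/\Z_{k+1}$, so that $z=w^{l+1}$, the cover $w\mapsto w^{l+1}=z$ already accounts for the ramification of order $l$ over $z=0,\infty$, while $M\to M/\Z_{k+1}$ is the cyclic $\Z_{k+1}$-cover ramified of order $k$ exactly over the $2l+2$ points $Q_i$ lying over $z=1$ and $z=m$. Up to a constant, the only $\Z_{l+1}$-invariant function on the middle sphere with simple zeros along the fibre over $z=1$ and simple poles along the fibre over $z=m$ (and neither at $w=0,\infty$) is $\tfrac{z-1}{z-m}$, so I claim $M$ is the curve
\begin{equation*}
y^{k+1}=\frac{z-1}{z-m},\qquad z=w^{l+1}.
\end{equation*}
One checks this has full ramification $k+1$ over each $Q_i$ (the orders $\pm1$ are coprime to $k+1$) and is unramified over $z=0,\infty$ (since $\tfrac{z-1}{z-m}$ equals $1/m$ at $w=0$ and $1$ at $w=\infty$), matching the branch data of Lemma \ref{Solving-an-ODE}.

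With this model in hand, the second step treats the connection as a scalar logarithmic connection $d+\omega$ with $\omega=\tfrac{n_0}{l+1}\tfrac{dz}{z}\pm\tfrac{n_1}{k+1}\big(\tfrac{dz}{z-1}-\tfrac{dz}{z-m}\big)$, the two signs being the two diagonal entries, handled simultaneously. A (multivalued) parallel section of $d+\omega$ is $s=\exp(-\int\omega)=z^{-n_0/(l+1)}\big(\tfrac{z-1}{z-m}\big)^{\mp n_1/(k+1)}$. The key point is that $h^{*}s$ is \emph{single-valued} on $M$: indeed $z^{1/(l+1)}=w$ and $\big(\tfrac{z-1}{z-m}\big)^{1/(k+1)}=y$ are honest meromorphic functions on $M$, so $h^{*}s=w^{-n_0}y^{\mp n_1}$. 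Hence $h^{*}\omega=-d\log(w^{-n_0}y^{\mp n_1})$ is the logarithmic derivative of a global meromorphic function, and the gauge transformation by $\operatorname{diag}(w^{n_0}y^{n_1},\,w^{-n_0}y^{-n_1})$ carries $h^{*}(d+\omega)$ to the trivial connection $d$. In particular the monodromy on $M$ is trivial for all $n_0,n_1\in\Z$.

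The main obstacle — and the only place where the geometry genuinely enters — is the first step: pinning down the correct cyclic-cover model, i.e.\ recognizing that it is precisely the combination $\tfrac{z-1}{z-m}$, with \emph{opposite} residues at $z=1$ and $z=m$, whose $(k+1)$-st root $y$ lives on $M$. This is exactly why the lemma uses $\tfrac{dz}{z-1}-\tfrac{dz}{z-m}$ rather than either term alone. Equivalently, one must verify that the pullback has trivial monodromy not merely around the ramification points but also around the genus cycles of $M$, and the global function $y$ encodes precisely this. As an independent cross-check one can argue locally: a small loop in $M$ about a point over $z=0$ or $\infty$ wraps $l+1$ times and about a point over $z=1$ or $m$ wraps $k+1$ times, so each local monodromy is $\exp(\mp 2\pi i\,(l+1)\tfrac{n_0}{l+1})=1$, respectively $\exp(\mp 2\pi i\,(k+1)\tfrac{n_1}{k+1})=1$. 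This shows the pullback extends to a genuine flat line bundle on the closed surface $M$, and the single-valued function $w^{n_0}y^{n_1}$ then trivializes it.
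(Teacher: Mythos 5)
Your proof is correct and follows essentially the same route as the paper: the paper also exhibits a global single-valued parallel section by observing that $z^{1/(l+1)}$ (the intermediate quotient map $\tilde h\colon M\to M/\Z_{k+1}$) and, by symmetry, $\bigl(\tfrac{z-1}{z-m}\bigr)^{1/(k+1)}$ are honest meromorphic functions on $M$, after splitting the abelian monodromy representation into the product of the two logarithmic pieces. Your version merely makes the cyclic-cover model $y^{k+1}=\tfrac{z-1}{z-m}$, $z=w^{l+1}$ explicit and handles both factors at once via the gauge $w^{n_0}y^{n_1}$, which is the same argument in coordinates.
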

\begin{proof}
Because the $\C*$-valued monodromy representation of $d\pm \frac{n_0}{l+1}\frac{dz}{z}\pm \frac{n_1}{k+1}(\frac{dz}{z-1}-\frac{dz}{z-m})$ is given by
the multiplication of the monodromy representations of the connections $d\pm \frac{n_0}{l+1}\frac{dz}{z}$ and $d\pm \frac{n_1}{k+1}(\frac{dz}{z-1}-\frac{dz}{z-m}),$ 
it is enough to prove the assertion for either $n_0=0$ or $n_1=0.$ By symmetry, we can simply restrict to the case of $n_1=0.$
Let $\tilde h$ denote the map $M\to\CP^1=M/\Z_{k+1}$ with $p_+=0,\, p_-=\infty$. Then
\[\frac{d\tilde h}{\tilde h}=\frac{1}{l+1}\frac{d h}{h}=h^*(\frac{1}{l+1}\frac{d z}{z})\]
and we obtain that  $\tilde h^{\mp n_0}$ is a globally well-defined parallel section of the connection $d\pm \frac{n_0}{l+1}\frac{dz}{z}$ on $M$.
\end{proof}
\section{Abelianization of Fuchsian Systems and $(k,l)$-symmetric surfaces}\label{Abelianization}  
 In the second section we have described the integrable systems approach to CMC surfaces via their
 associated family of flat $SL(2, \C)$-connections.  Symmetries of the surface induce symmetries of the associated family viewed as a complex curve in $\mathcal A^2(M)$. In this section we want to study CMC surfaces  such that their associated family of $\SL(2, \C)$-connections is gauge equivalent (by a $\lambda$-dependent gauge)
 to a family of Fuchsian Systems on a $4$-punctured sphere. In this case, although the connections $\nabla^{\lambda}$ do not split into line bundle connections, abelianization \cite{He3, HeHe} gives a $2$-to-$1$ correspondence between $\mathcal A^1(T^2)$ the space of flat line bundle connections  on a torus $T^2$ and $\mathcal A^2(M)$. Thus the map $\mathcal D$ needed in Theorem \ref{lifting_theorem} can be determined by families 
 of line bundle connections on $T^2$. However, we need to pay special attention 
 that the $2$-to-$1$ correspondence has branch points, and we are again forced to use a spectral curve $\Sigma$ in order to
 parametrize families of 
 line bundle connections giving rise to the associated $\C^*$-families of CMC surfaces. Moreover, we need to guarantee that $\mathcal D$  maps the preimage of the unit circle (inside the spectral curve) to gauge equivalence classes of unitary flat connections.
 
We need to fix some notations first. A Fuchsian System on the  Riemann sphere with $4$ punctures $z_0 = 0, z_1 = 1, z_2 = m, z_3 = \infty,$ 
for some $m \in \C \setminus \{0,1\}$ 
is a meromorphic connection on the trivial bundle $V= \underline{\C}^2$ of the form
 \begin{equation}\label{Fuchs}\nabla = d + A_0\frac{dz}{z} + A_1\frac{dz}{z-1} + A_2\frac{dz}{z-m},\end{equation}
 
 where we choose $A_i \in \mathfrak{sl}(2, \C).$  In this setup $\nabla$ has automatically a singularity in $z = \infty$ with residue $A_3 = -A_0-A_1-A_2.$
 
The conjugacy class of the local monodromy around a puncture $z_i$ is determined by the eigenvalues $\pm \hat \rho_i$ of $A_i.$ Necessary conditions for the monodromy representation to be unitary up to conjugation (unitarizable monodromy) are posed by \cite{Bis3}, the so called Biswas conditions. We restrict to the case where $\hat\rho_i \in ]0, \tfrac{1}{2}[$ %torus 1/4, g=2 1/3,..
here, and for explaining the Biswas condition 
the notion of a parabolic structure turns out to be useful.

A Fuchsian system \eqref{Fuchs} on a punctured sphere naturally defines a parabolic structure as follows: Let $V: \C^2 \rightarrow \CP^1$ be the trivial holomorphic bundle and let $E_i$ denote the eigenspace of the residue $A_i \in \mathfrak{sl}(2, \C)$ of $\nabla$ at $z_i$ with respect to the positive eigenvalue $\hat\rho_i$. A parabolic structure on $V$ is given by a filtration of the fibers of $V$ at the singular points $z_i$
 \[0 \subset E_i \subset V_{z_i}\]
 together with a weight filtration $(\hat\rho_i, - \hat\rho_i),$ i.e., the line $E_i$ is equipped with the weight $\hat\rho_i$ and $V_{z_i} \setminus E_i$ is equipped with the weight $- \hat\rho_i,$ see for example \cite{MSe, Pi}. To a holomorphic  line subbundle $L\subset V$ we assign its parabolic degree
 \[ \text{par-deg} L = \deg L + \sum_{i=0}^3 \gamma_i,\]
 where either $\gamma_i = \hat\rho_i$ if $L_{z_i} = E_i,$ or $\gamma = -\hat \rho_i$. Equivalent Fuchsian systems (by conjugation) yield equivalent parabolic structures.  A parabolic structure is called (semi-)stable if the parabolic degree of every holomorphic line subbundle $L$ is strictly negative ($\leq 0$). 
 The correspondence between stable parabolic structures  and   irreducible Fuchsian systems with unitarizable monodromy is $1$-to-$1$ by the Riemann-Hilbert correspondence for $SL(2, \C)$ and by the Mehta-Seshadri correspondence \cite{MSe}. 

Two Fuchsian systems inducing the same parabolic structure differ by a parabolic Higgs field, i.e., a meromorphic $1$-form $$\Psi \in H^{1,0}(\CP^1 \setminus\{z_0, ..., z_3\}, \mathfrak{sl}(2, \C))$$
with first order poles at $z_i$ such that $E_i$ lies in the kernel of the residues of $\Psi.$ Hence, the
determinant of $\Psi$ has at most first order poles at the singularities.
\subsection{Abelianization}
In the abelian case of a reducible $SL(2, \C)$ connection, the connection is (generically) 
given as the direct sum of line bundle connections.
If $\nabla$ is irreducible, the Fuchsian system is still determined by certain flat line bundle connections.
The line bundles are the eigenlines of 
 a  parabolic Higgs field $\Psi$ with non-zero determinant. The correspondence between these line bundle connections and $\nabla$ extends to the exceptional Fuchsian systems which do not permit a Higgs field
 with non-zero determinant. We briefly summarize the relevant results of \cite{HeHe} here. 

Let $\nabla$ be a Fuchsian system as defined in \eqref{Fuchs} with (semi-)stable parabolic structure. We first consider the case where $\nabla$ has a parabolic Higgs field $\Psi$ with non-zero determinant. Since the residues of $\Psi$ are nilpotent and because $deg(K^2) = -4$ for $\CP^1$,  there exists a $c \in \C_*$ with
\[\det \Psi = c \frac{(dz)^2}{z(z-1)(z-m)}.\]
Without loss of generality we can choose $c=1.$ The eigenlines of $\Psi$ are thus well defined on a double covering  of  $\CP^1$ branched at $0, 1, m$ and $\infty,$ which is a complex torus $T^2 = \C/\Gamma.$ Let $\Gamma = \Span\{1+\tau,1- \tau\}$ 
for some $\tau\in \C\setminus\R.$ Without loss of generality the elliptic involution $\sigma$ inducing $\pi : T^2 \rightarrow \CP^1=T^2/\sigma$ is given by $[\omega] \mapsto [-\omega].$ Thus  the preimages of the branch points $0,1,m, \infty \in \CP^1$ are $\omega_0 := [0],$ $\omega_1:=  [\tfrac{1}{2}- \tfrac{\tau}{2}] $, $\omega_2:= [1]$ and $\omega_3 := [\tfrac{1}{2} + \tfrac{\tau}{2}].$ Let $L^{\pm}$ denote the Eigenlines of $\pi^*\Psi.$ Since 
\begin{equation}\label{L+L-}
L^+ \otimes L^- = L^+ \otimes \sigma^*L^+ = L(-\omega_0 -... - \omega_3),
\end{equation}
the eigenlines $L^\pm$ have degree $-2.$ Let $S:= L(-2 \omega_0) (= L(-2\omega_i)).$ Then we have $\sigma^*S = S$ and $S^2 = L(-\omega_0 - ... \omega_3),$ because there exists a meromorphic function on $T^2$ with a pole of order $3$ in $\omega_0$ and simple zeros at the other $\omega_i,$ e.g. the derivative of the Weierstrass $\wp$-function. From \eqref{L+L-}
we get the existence of an $E \in Jac(T^2),$ where Jac$(T^2) \cong \C/\Lambda$ is the moduli space of holomorphic line bundles on $T^2$ of degree $0,$
such that
$$L^+ = S \otimes E, \quad L^- = S \otimes E^*.$$
Let $\nabla^S$ be the connection on $S^*$ such that the connection $\nabla^S \otimes \nabla^S$ on $(S^*)^2$ annihilates the holomorphic section $s_{\omega_0+ ... +\omega_3}$ with simple zeros at $\omega_i.$ Then by construction $\nabla^S$ has  simple poles at $\omega_i$ with residues $-\tfrac{1}{2}.$ A computation in local coordinates (see \cite{HeHe}) shows that with respect to the splitting $V = E\oplus E^*$ the connection $\hat \nabla = \nabla\otimes \nabla^S$ is
gauge equivalent to
\begin{equation}\label{abel_connection}
\hat\nabla=\hat\nabla^{\alpha,\chi}=d+\dvector{\alpha dw-\chi d\bar w&\beta^-\\ \beta^+ &-\alpha dw+\chi d\bar w},\end{equation}
where $w$ is the coordinate on $\C,$ $\alpha,\chi\in\C$ are suitable complex numbers, and
$\beta^\pm=\beta^\pm_\chi$ are meromorphic sections of the holomorphic line bundles given by the holomorphic structures
\[\dbar^\C\pm2\chi d\bar w\] with simple poles at $w_0,..,w_3.$    Using $\vartheta$-functions, we can write down
the second fundamental forms $\beta^\pm$ of $\hat\nabla$ with respect to the decomposition $E\oplus E^*$ explicitly as long as
$L(\dbar-\chi d\bar w)$ is not a spin bundle of $\C/\Gamma$. Note that spin bundles correspond to the exceptional cases where the parabolic structure does not admit a non-zero parabolic Higgs field.  Further,  the residue of $\hat \nabla$ at $w_i$ is given by $$\text{Res}_{\omega_i}\pi^*\hat \nabla = \begin{pmatrix}0 & 2\hat \rho_i - \tfrac{1}{2}\\ 2\hat \rho_i - \tfrac{1}{2}&0\end{pmatrix}.$$

In \cite{HeHeSch} we considered the case where the associated family $\nabla^{\lambda}$ was gauge equivalent to Fuchsian systems with weights $\rho_i := 2\hat \rho_i - \tfrac{1}{2} = \rho$ independent of $i$ (and $\lambda$). In the following we want to consider the case where we have two distinct pairs of weights. More concretely, we want to restrict to the case with $\rho_0 = \rho_2$ and $\rho_1= \rho_3$.
We will see below that this is the appropriate setup to study $(k,l)$-symmetric CMC surfaces.

   \begin{The}[\cite{HeHe}]\label{2:1} Let  $\rho_0=\rho_2,  \rho_1=\rho_3\in]-\tfrac{1}{2},\tfrac{1}{2}[.$ 
 Away from holomorphic line bundles $\dbar-\chi d\bar w$ with 
 $\chi\in \frac{\pi i(1+ \tau)}{2(\tau-\bar\tau)}\Z+\frac{\pi i(1-\tau)}{2(\tau-\bar\tau)}\Z\equiv\tfrac{1}{2}\Lambda,$ \eqref{abel_connection} gives rise to a 2-to-1 correspondence
 between the moduli space $\mathcal A^1(T^2)$ of flat line bundles on  $T^2$ and an open dense subset of the moduli space
 $\mathcal A^2_{\rho_0,..,\rho_3}(\CP^1\setminus\{z_0,..,z_3\})$ of flat $\SL(2,\C)$-connections on the 4-punctured sphere
$\CP^1\setminus\{z_0,..,z_3\}$ with local monodromies (around $z_i$) lying in the conjugacy class determined by $\rho_i.$

 This 2-to-1 correspondence extends to holomorphic line bundles $\dbar-\chi d\bar w$ with 
 $\chi=\gamma\in \tfrac{1}{2} \Lambda$  in the following way: 
 Assume $\alpha(\chi)$ is a holomorphic map on $U\subset \C\setminus(\tfrac{1}{2} \Lambda)$ and $\gamma\in \bar U\cap \tfrac{1}{2}\Lambda.$ Then, the $\SL(2,\C)$-connections $\nabla^{\chi,\alpha(\chi)}$ on $\CP^1\setminus\{z_0,..,z_3\}$ determined by \eqref{abel_connection} extend to $\chi=\gamma$ (after suitable gauge transformations) 
 if and only if $\alpha$ expands around $\chi=\gamma$ as
\begin{equation}\label{a_spin_expansion}
\alpha(\chi)\sim_\gamma \frac{2\pi i}{\tau-\bar\tau}\frac{\mu_{\gamma}}{\chi-\gamma}+\bar\gamma+\,\text{ higher order terms in } \chi,\end{equation}
for \[\mu_\gamma=\left\{\begin{array}{cl} \pm(\rho_0 + \rho_1) & \mbox{if }\gamma\in\Lambda\\ 
\pm (\rho_0- \rho_1) & \mbox{if } \gamma\in\frac{2\pi i}{\tau-\bar\tau}+\Lambda.\\
0 & \mbox{ else } 
\end{array}\right.
\]
Moreover, the induced parabolic structure at $\chi=\gamma$ is
stable, strictly semi-stable or unstable if $\mu_\gamma>0,$ $\mu_\gamma=0$ or $\mu_\gamma<0,$ respectively.
 \end{The}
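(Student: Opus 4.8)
The plan is to build the correspondence explicitly from the abelianization formula \eqref{abel_connection} and then to analyse its degeneration at the spin bundles by a local computation at the branch points $\omega_i\in T^2$. For the generic part I would first fix the forward map. Given $(\chi,\alpha)\in\mathcal A^1(T^2)$ with $\chi\notin\tfrac12\Lambda$, the bundle $L(\dbar-\chi d\bar w)$ is not a spin bundle, so the second fundamental forms $\beta^\pm_\chi$ are the explicit $\vartheta$-quotients recorded before the theorem and \eqref{abel_connection} defines a genuine connection $\hat\nabla^{\alpha,\chi}$ on $T^2$ with residue $\begin{pmatrix}0&\rho_i\\\rho_i&0\end{pmatrix}$ at each $\omega_i$. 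By construction this connection is equivariant under the fixed lift of the elliptic involution $\sigma$, so after undoing the twist by $\nabla^S$ it descends to a Fuchsian system $\nabla^{\chi,\alpha}$ on $\CP^1\setminus\{z_0,\dots,z_3\}$ with the prescribed local monodromy class $\rho_i$ at $z_i$. Its fibre is exactly $\{(\chi,\alpha),(-\chi,-\alpha)\}$: these correspond to the two eigenlines $L^\pm=S\otimes E^{\pm1}$ of the Higgs field, interchanged by $\sigma^*$, so the deck transformation of the covering is $(\chi,\alpha)\mapsto(-\chi,-\alpha)$, which is the source of the ``$2$-to-$1$''.

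To see that the image is open and dense I would count dimensions. Both $\mathcal A^1(T^2)$ and the relative character variety $\mathcal A^2_{\rho_0,\dots,\rho_3}(\CP^1\setminus\{z_0,\dots,z_3\})$ have complex dimension $2$. On the locus where $\nabla^{\chi,\alpha}$ admits a parabolic Higgs field with non-zero determinant the construction is reversible — one recovers $(\chi,\alpha)$, up to the involution above, from the eigenlines of $\Psi$ — so the map is a local biholomorphism onto its image modulo the $2$-to-$1$ identification, hence a submersion there. The exceptional Fuchsian systems, those without such a Higgs field, are precisely the image of the spin locus and form a subvariety of positive codimension; the Mehta--Seshadri correspondence \cite{MSe} identifies the stable part of $\mathcal A^2$ with parabolic bundles and records into which stratum the image falls.

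The substantive point is the extension across a spin bundle $\chi=\gamma\in\tfrac12\Lambda$, where the $\vartheta$-formula for $\beta^\pm_\chi$ degenerates and one off-diagonal entry of \eqref{abel_connection} acquires a pole as $\chi\to\gamma$, so the expression is no longer a bounded connection. The strategy is to look for a $\chi$-dependent gauge transformation — a shear of the splitting $E\oplus E^*$ that blows up as $\chi\to\gamma$ — absorbing the divergent off-diagonal term; the family $\nabla^{\chi,\alpha(\chi)}$ extends precisely when such a gauge exists, and its existence forces the diagonal entry $\alpha$ to carry a compensating simple pole. Performing this in a local coordinate centred at the relevant branch point and matching the residue produced by the gauge against the prescribed residue $\begin{pmatrix}0&\rho_i\\\rho_i&0\end{pmatrix}$ of $\hat\nabla$ yields the leading coefficient $\tfrac{2\pi i}{\tau-\bar\tau}\mu_\gamma$ of \eqref{a_spin_expansion}. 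The three values of $\mu_\gamma$ come from the pairing of the four branch points under the two weights $\rho_0=\rho_2$ and $\rho_1=\rho_3$: the half-lattice class of $\gamma$ determines whether the relevant residues add ($\rho_0+\rho_1$), subtract ($\rho_0-\rho_1$), or cancel ($0$). The same local analysis fixes the constant term $\bar\gamma$, which makes the limiting flat line bundle $d+\bar\gamma\,dw-\gamma\,d\bar w$ unitary, hence self-dual, reflecting the coalescence $L^+\cong L^-$ of the eigenlines at a branch point.

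Finally I would obtain the stability trichotomy by reading off the limiting parabolic structure at $\chi=\gamma$ and computing the parabolic degree of the destabilising candidate, the coalescing eigenline $L^\pm$, which in the limit becomes the spin line $S$. Its parabolic degree is an integer corrected by a signed sum of the $\rho_i$ whose sign is governed by $\mu_\gamma$, so that $\text{par-deg}<0,\,=0,\,>0$ match $\mu_\gamma>0,\,=0,\,<0$, i.e.\ stable, strictly semistable, unstable. I expect the main obstacle to be precisely the local analysis of the previous paragraph: controlling the blow-up of the $\vartheta$-quotients $\beta^\pm_\chi$, constructing the compensating gauge, and matching its residue to the weights to pin down both $\mu_\gamma$ and the normalisation $\tfrac{2\pi i}{\tau-\bar\tau}$. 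By contrast the generic $2$-to-$1$ statement is essentially formal once the explicit formulas of \eqref{abel_connection} are in hand.
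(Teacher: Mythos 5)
This theorem is not proved in the paper at all: it is imported verbatim from \cite{HeHe} (the text immediately after the statement says ``A proof of theorem can be found in \cite{HeHe}''), with the only change being a renormalization of the weights from $\CP^1$ to $\C/\Gamma$. So there is no in-paper proof to measure your attempt against; what the paper does provide, in the paragraphs preceding the theorem, is exactly the framework your sketch relies on (eigenlines of a parabolic Higgs field on the elliptic double cover, $L^\pm=S\otimes E^{\pm1}$, the twist by $\nabla^S$, the residue $\rho_i=2\hat\rho_i-\tfrac12$ of $\hat\nabla$ at $\omega_i$, and the identification of the spin bundles with the Fuchsian systems admitting no Higgs field of non-zero determinant). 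Your outline is consistent with all of this, and the identification of the deck transformation with $(\chi,\alpha)\mapsto(-\chi,-\alpha)$ agrees with the oddness of $\alpha^{MS}$ under dualizing recorded in Lemma \ref{a^u_symmetries}.

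That said, as a proof the proposal stops short precisely at the content of the theorem. First, the ``if and only if'' at $\chi=\gamma$ is reduced to the assertion that extendability is equivalent to the existence of a diagonal shear gauge absorbing the blow-up of $\beta^\pm_\chi$; the ``only if'' direction (any extension must arise from such a gauge) is the nontrivial half and is not argued --- one needs to control how the eigenline decomposition degenerates as $L^+$ and $L^-$ coalesce, not merely exhibit one gauge that works. Second, the actual values of $\mu_\gamma$ --- in particular that $\mu_\gamma$ vanishes for exactly two of the four half-lattice classes and equals $\pm(\rho_0\pm\rho_1)$ on the other two, with the normalization $\tfrac{2\pi i}{\tau-\bar\tau}$ and constant term $\bar\gamma$ --- are the quantitative heart of \eqref{a_spin_expansion}, and your ``add, subtract, or cancel'' heuristic and the unitarity remark about $d+\bar\gamma\,dw-\gamma\,d\bar w$ are consistency checks, not derivations; nothing in the sketch would detect an error in, say, which half-lattice class gives $\rho_0-\rho_1$ versus $0$. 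Third, the stability trichotomy requires identifying the limiting parabolic structure and verifying that the single candidate subbundle you name is the extremal one; that its parabolic degree has the sign of $-\mu_\gamma$ is again asserted rather than computed. Also, a small imprecision: the exceptional Fuchsian systems are not ``the image of the spin locus'' under the generic correspondence --- the spin locus is excluded from the domain, and those systems are reached only through the extension you are trying to establish. None of this points to a wrong approach; it is the approach of \cite{HeHe}. But the decisive local computation at the branch points, which is where the theorem actually lives, is left as a program.
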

A proof of theorem can be found in \cite{HeHe}. Note that we express the expansion formula \eqref {a_spin_expansion} 
in terms of the weights on $\C / \Gamma$ instead of the weights $\CP^1\setminus \{z_0, ..., z_3\}$ as in \cite{HeHe}.

\begin{Rem}
Consider a CMC surface whose associated family of flat connections is gauge equivalent to a family 
of Fuchsian systems.
At the Sym points $\lambda_i\in S^1\subset \C_*$ the corresponding  Fuchsian system splits into line bundle connections. Thus its is necessary that the corresponding parabolic structure is strictly semi-stable. Therefore, the existence of parabolic structures with $\mu_\gamma = 0$ is an obstruction for the existence of closed CMC surfaces with the given weights. 
\end{Rem}
We use $(\rho_0, \rho_1,\chi,\alpha)$  to parametrize  flat connections of the form \eqref{abel_connection}
with four simple poles at $\omega_0, ... \omega_3$ on $T^2$, where we consider $\chi$ as a point in $Jac(T^2)$ and the tuple $(\chi,\alpha)$ as a point $\mathcal A^1(T^2),$ the moduli space of flat line bundle connections on $T^2,$ via
$$(\chi, \alpha) \mapsto d+ \alpha dw - \chi d \bar w.$$
By the Mehta-Seshadri Theorem \cite{MSe} and \cite{HeHe} there is a unique $\alpha=\alpha^u_{\rho_0, \rho_1}(\chi)\in\C$
for every $\chi\in \C\setminus(\tfrac{1}{2}\Lambda)$ 
such that the monodromy representation of the connection 
given by
\eqref{abel_connection} for $(\rho_0, \rho_1,\chi,\alpha^u_\rho(\chi))$ is unitarizable.
In fact,  for every $\rho_0, \rho_1 \in]-\tfrac{1}{2},\tfrac{1}{2}[$
this map $\chi\mapsto \alpha^u_{\rho_0, \rho_1}(\chi)$ induces
a real analytic section of the affine holomorphic bundle $\mathcal A^{1}(T^2)\to Jac(T^2)$ which we  denote by 
\begin{equation}\label{alphaurho}
\alpha^{MS}_{\rho_0, \rho_1}\in\Gamma(Jac(T^2)\setminus\{0\},\mathcal A^1 (T^2)).
\end{equation}
satisfying
$\alpha^{MS}_{\rho_0, \rho_1}([\dbar-\chi d\bar w])=[d+\alpha^u_{\rho_0, \rho_1}(\chi)dw-\chi d\bar w].$
Note that $\alpha^u_{\rho_0, \rho_1}$ satisfies the following functional properties:
\begin{equation}\label{lift_alpha}
\begin{split}
\alpha^u_{\rho_0, \rho_1}(\chi+\frac{2\pi i(1+\tau)}{\tau-\bar\tau})&=\alpha^u_\rho(\chi)+\frac{2\pi i(1+\bar\tau)}{\tau-\bar\tau}\bar\tau\\
\alpha^u_{\rho_0, \rho_1}(\chi+\frac{2\pi i(1-\tau)}{\tau-\bar\tau})&=\alpha^u_\rho(\chi)+\frac{2\pi i(1-\bar\tau)}{\tau-\bar\tau}
\end{split}
\end{equation}
for all $\rho_0, \rho_1\in]-\tfrac{1}{2},\tfrac{1}{2}[$ and $\chi\in\C\setminus \tfrac{1}{2s}\Lambda.$
As in the case considered in \cite{HeHeSch} we have the following lemma holds. 
\begin{Lem}\label{a^u_symmetries}
Let $T^2=\C/((1+\tau)\Z+(1-\tau) \Z)$ and $\rho_0, \rho_1\in[0,\tfrac{1}{2}[.$ Then, the section $\alpha^{MS}_{\rho_0, \rho_1}$ in \eqref{alphaurho}
is odd with respect to the involution on $Jac(T^2)$ induced by dualizing.
If $\tau\in i\R$ then $\alpha^{MS}_{\rho_0, \rho_1}$ is real in the following sense:
\[\alpha^u_{\rho_0, \rho_1}(\bar\chi)=\overline{\alpha^u_{\rho_0, \rho_1}(\chi)}\]
for all $\chi\in\C\setminus(\frac{\pi i(1+\tau)}{\tau-\bar\tau}\Z+\frac{\pi i(1-\tau) }{\tau-\bar\tau}\Z).$ 
 \end{Lem}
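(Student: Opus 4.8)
The plan is to use the characterization of $\alpha^{MS}_{\rho_0,\rho_1}$ as the \emph{unique} section of the affine bundle $\mathcal A^1(T^2)\to Jac(T^2)$ whose values $(\chi,\alpha^u_{\rho_0,\rho_1}(\chi))$ produce, via \eqref{abel_connection} and the correspondence of Theorem \ref{2:1}, an $\SL(2,\C)$-connection $N(\chi,\alpha)$ on the punctured sphere with unitarizable monodromy (Mehta--Seshadri). Both asserted symmetries then reduce to exhibiting an involution of the construction that preserves this unitarizability together with the weight data $(\rho_0,\rho_1)$, and acts on the abelian parameters as the prescribed map; uniqueness of the unitary lift at each base point forces $\alpha^{MS}$ to be anti-invariant (oddness) resp.\ conjugation-invariant (reality).

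For the oddness I would use that the two preimages of a generic $\SL(2,\C)$-connection under the $2$-to-$1$ map are exactly the flat connections on the two eigenlines $L^+,L^-$ of the Higgs field, which are \emph{dual} to one another: in the splitting $V=E\oplus E^*$ of \eqref{abel_connection} these are $d+\alpha dw-\chi d\bar w$ on $E$ and its dual $d-\alpha dw+\chi d\bar w$ on $E^*$. Hence the deck transformation of the $2$-to-$1$ map is dualizing, and on parameters it reads $(\chi,\alpha)\mapsto(-\chi,-\alpha)$, covering $\chi\mapsto-\chi$ on $Jac(T^2)$. Since $(\chi,\alpha^u(\chi))$ and $(-\chi,-\alpha^u(\chi))$ define the \emph{same} class $N$, which is unitarizable, the value $-\alpha^u(\chi)$ is a unitary lift at the base point $-\chi$; uniqueness there gives $\alpha^u_{\rho_0,\rho_1}(-\chi)=-\alpha^u_{\rho_0,\rho_1}(\chi)$, which is precisely oddness under dualizing. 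No weight bookkeeping is needed, as the two preimages represent literally the same connection.

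For the reality statement I would assume $\tau\in i\R$, so that $\Gamma=\Span\{1+\tau,1-\tau\}$ is invariant under complex conjugation and $c\colon w\mapsto\bar w$ descends to an antiholomorphic involution of $T^2$; it fixes $\omega_0,\omega_2$ and interchanges $\omega_1\leftrightarrow\omega_3$, and descends through $\pi$ to an antiholomorphic involution of $\CP^1\setminus\{z_0,\dots,z_3\}$ interchanging the two punctures carrying the common weight $\rho_1=\rho_3$ and fixing those carrying $\rho_0=\rho_2$. Consider $\Theta(\nabla)=\overline{c^*\nabla}$ (pull back by $c$, then conjugate the connection matrix entrywise). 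Since $\overline{\SU(2)}=\SU(2)$ and $c$ is a diffeomorphism, $\Theta$ preserves unitarizability; since the weights are real and the puncture interchange matches the symmetry $\rho_1=\rho_3$ (and $\rho_0=\rho_2$), $\Theta$ preserves the weight data; and since $c^*\frac{(dz)^2}{z(z-1)(z-m)}$ is conjugate to itself once $m\in\R$, equivalently $\tau\in i\R$, $\Theta$ is compatible with the abelianization and hence with the $2$-to-$1$ map. On the diagonal one computes directly $\overline{c^*(\alpha dw-\chi d\bar w)}=\bar\alpha dw-\bar\chi d\bar w$, so $\Theta$ sends the eigenline connection $(\chi,\alpha)$ to $(\bar\chi,\bar\alpha)$; uniqueness of the unitary lift at $\bar\chi$ then yields $\alpha^u_{\rho_0,\rho_1}(\bar\chi)=\overline{\alpha^u_{\rho_0,\rho_1}(\chi)}$.

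The main obstacle is the claim that $\Theta$ acts as $(\chi,\alpha)\mapsto(\bar\chi,\bar\alpha)$ on the \emph{entire} connection \eqref{abel_connection} rather than only on its diagonal part: one must check that, after $\Theta$ and a suitable gauge transformation, the off-diagonal coefficients $\beta^\pm$ -- given by $\vartheta$-functions on the rhombic lattice $\Gamma$ -- go over to those attached to $(\bar\chi,\bar\alpha)$, with the roles of $\beta^+$ and $\beta^-$ exchanged in accordance with the swap $\omega_1\leftrightarrow\omega_3$. This is the same bookkeeping as in \cite{HeHeSch} for the single-weight case $\rho_i=\rho$; the only new point is to verify that the puncture interchange induced by $c$ respects the two distinct weight pairs, which it does precisely because $c$ swaps the two punctures carrying the common weight $\rho_1=\rho_3$ while fixing those carrying $\rho_0=\rho_2$.
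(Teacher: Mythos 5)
Your argument is correct and matches the paper's route: the paper gives no independent proof of this lemma, deferring to the analogous statement in \cite{HeHeSch}, and your reconstruction --- oddness from the fact that the two preimages under the $2$-to-$1$ correspondence are the mutually dual eigenline connections $(\chi,\alpha)$ and $(-\chi,-\alpha)$ combined with uniqueness of the Mehta--Seshadri lift, and reality from the antiholomorphic involution $w\mapsto\bar w$ of the rhombic lattice, which fixes $\omega_0,\omega_2$ and swaps $\omega_1,\omega_3$ compatibly with $\rho_0=\rho_2$, $\rho_1=\rho_3$ --- is exactly the intended argument. One small correction: $\tau\in i\R$ is not equivalent to $m\in\R$ (Remark \ref{3d} places rhombic tori at $m\in S^1$, up to relabelling of the branch points the relevant condition is a circle, not the real axis); this does not affect your proof, since the involution is built upstairs on $T^2$ and only needs to permute the punctures consistently with the weight data, which it does.
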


\begin{The}\label{slitting_tori}
Let $\rho_0, \rho_1\in]0;\tfrac{1}{2}[$ and
let $\lambda\colon\Sigma\to D_{1+\epsilon}\subset\C$ be a double covering of the $(1+\epsilon)$ disc branched over finitely many points
$\lambda_0=0,\lambda_1,..,\lambda_k$ in the closed unit disc. Further, let $\chi\colon\Sigma\to Jac(T^2)$ be an odd map (with respect to the involution $\sigma$ on $\Sigma$ induced by $\lambda$) and $\hat{\mathcal D}$ be an odd meromorphic lift of $\chi$ to $\mathcal A^1(T^2)$  satisfying the following conditions:
\begin{enumerate}
\item $\chi(\lambda^{-1}(0))=0\in Jac(T^2);$
\item $\hat{\mathcal D}$ has a first order pole over $\lambda^{-1}(0);$
\item $\hat{\mathcal D}$ has a first order pole satisfying the condition induced by \eqref{a_spin_expansion}  at every $\xi_0\in\Sigma\setminus\lambda^{-1}(\{0\})$ with $\chi(\xi_0)\in \tfrac{1}{2}\Lambda$ and no further singularities, where $\Lambda=\frac{\pi i (1+ \tau)}{\tau-\bar\tau}\Z+\frac{\pi i(1- \tau)}{\tau-\bar\tau}\Z$ is the lattice generating
 $Jac(T^2)$; 

\item for all $\xi \in \lambda^{-1}(S^1)$ : $\hat{\mathcal D}(\xi)=\alpha^{MS}_\rho(\chi(\xi));$ 
\item  there are two points
$\xi_1,\xi_2\in\lambda^{-1}(S^1)\subset\Sigma$
satisfying
\[\chi(\xi_1)=[\frac{\pi i(1+\tau)}{2\tau-2\bar\tau}]\in Jac(T^2),\,\, \chi(\xi_2) =[\frac{\pi i (1-\tau)}{2\tau-2\bar\tau}] \in Jac(T^2).\]
\end{enumerate}

Then there exists a
conformal CMC immersion
\[f\colon \hat T^2\setminus (l_1\cup l_2 \cup l_3 \cup l_4)\to S^3\]
whose associated family of flat connections (see Theorem \ref{The1}) is determined by
$(\Sigma,\chi,\hat{\mathcal D}),$ where $\hat T^2 = \C / (2 \Z + 2\tau \Z)$ is the double covering of  $ T^2$ and 
 $l_1=\{[t \mid  t\in[0;1]\}$, $l_2=\{[\tfrac{1+ \tau}{2}+ t]\mid  t\in[0;1]\}$, $l_3 = \{[\tau+ t]\mid  t\in[0;1]\}$ and $l_4= \{[\tfrac{1+ 3\tau}{2} + t]\mid  t\in[0;1]\}$ as indicated in Figure \ref{4cut}. 
\end{The}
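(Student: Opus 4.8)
The plan is to produce the map $\mathcal D\colon\C^*\to\mathcal A^2$ demanded by Theorem \ref{lifting_theorem} directly out of the spectral data $(\Sigma,\chi,\hat{\mathcal D})$, and then to show that the three hypotheses of that theorem are precisely what conditions (1)--(5) encode. First I would use the data pointwise: for $\xi\in\Sigma$ away from the spin divisor, $\hat{\mathcal D}(\xi)=(\chi(\xi),\alpha(\xi))\in\mathcal A^1(T^2)$ is a flat line bundle connection, and by the $2$-to-$1$ correspondence of Theorem \ref{2:1} it determines a flat $\SL(2,\C)$-connection of the form \eqref{abel_connection} on $\CP^1\setminus\{z_0,\dots,z_3\}$ with weights $\rho_0=\rho_2$, $\rho_1=\rho_3$. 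Since $\chi$ and $\hat{\mathcal D}$ are odd under $\sigma$, the two points of a generic fibre $\lambda^{-1}(\lambda_0)$ carry dual line bundles $E,E^*$; because interchanging $E\leftrightarrow E^*$ corresponds to conjugating \eqref{abel_connection} by the off-diagonal involution, both are sent to the \emph{same} gauge class in $\mathcal A^2$. Hence $\xi\mapsto[\nabla^{\chi(\xi),\alpha(\xi)}]$ is $\sigma$-invariant and descends to a holomorphic map $\mathcal D\colon D_{1+\epsilon}\to\mathcal A^2_{\rho_0,\dots,\rho_3}(\CP^1\setminus\{z_0,\dots,z_3\})$.

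Next I would extend $\mathcal D$ across the loci where the abelianization degenerates, namely where $\chi(\xi_0)\in\tfrac12\Lambda$ (spin bundles) and where the two sheets of $\Sigma$ coalesce at the branch points $\lambda_i$. This is exactly what conditions (2) and (3) are built for: the prescribed first-order poles of $\hat{\mathcal D}$, with residues matching $\mu_\gamma$, are the hypotheses under which the extension statement of Theorem \ref{2:1}, governed by the expansion \eqref{a_spin_expansion}, applies, so that after a suitable $\lambda$-dependent gauge the connection extends holomorphically into $\mathcal A^2$ across each such $\lambda(\xi_0)$; oddness renders the branch points of $\lambda$ inessential for the $\sigma$-invariant class. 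This upgrades $\mathcal D$ to a holomorphic map on all of $D_{1+\epsilon}$, and by the Schwarzian reflection principle (Remark following Theorem \ref{lifting_theorem}) together with condition (4) it extends to $\C^*$.

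I would then pull $\mathcal D$ back along the fourfold branched covering $h\colon\hat T^2\to\CP^1$. Over each $z_i$ there are two ramification points carrying the residue $\begin{pmatrix}0&\rho_i\\\rho_i&0\end{pmatrix}$, and the cuts $l_1,\dots,l_4$ join the lifts of $\omega_0$ to those of $\omega_2$, and of $\omega_1$ to those of $\omega_3$ -- precisely the pairs of equal weight $\rho_0=\rho_2$, resp.\ $\rho_1=\rho_3$. Along these cuts the multivalued off-diagonal sections $\beta^\pm$ of \eqref{abel_connection} branch, so removing them leaves a genuine flat $\SL(2,\C)$-connection, and $\mathcal D$ maps $\C^*$ into $\mathcal A^2\bigl(\hat T^2\setminus(l_1\cup\dots\cup l_4)\bigr)$. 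The three hypotheses of Theorem \ref{lifting_theorem} now read off: \emph{(i)} unitarity along $S^1$ is condition (4) together with the defining property \eqref{alphaurho} of the Mehta--Seshadri section and its reality from Lemma \ref{a^u_symmetries}; \emph{(ii)} the local lift at $\lambda=0$ with nilpotent $\Psi$ comes from conditions (1)--(2), where $\chi\to 0\in\Lambda$ with $\mu_0=\pm(\rho_0+\rho_1)\neq0$ forces through \eqref{a_spin_expansion} a simple pole whose leading coefficient is the nilpotent off-diagonal field $\Psi$; \emph{(iii)} the two distinct trivial gauge classes are supplied by condition (5), since at $\xi_1,\xi_2$ the values $\chi(\xi_j)\in\tfrac12\Lambda\setminus\Lambda$ make the connection reducible, and on the cut torus its monodromy trivializes by the pulled-back-diagonal computation of Lemma \ref{Solving-an-ODE}. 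Theorem \ref{lifting_theorem} then yields the desired CMC immersion $f\colon\hat T^2\setminus(l_1\cup\dots\cup l_4)\to S^3$ with branch points at the zeros of $\Psi$.

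The main obstacle I anticipate is the holomorphic extension of $\mathcal D$ as a map into $\mathcal A^2$ across \emph{all} degenerate loci at once -- the spin points and the branch points $\lambda_i$ of $\Sigma$ -- while simultaneously verifying that $\hat T^2\setminus(l_1\cup\dots\cup l_4)$ is the correct conformal domain on which the pulled-back connection is flat and on which the two Sym-point monodromies become trivial rather than merely reducible. Both issues funnel into the precise pole prescriptions of conditions (2)--(3) and the expansion \eqref{a_spin_expansion} of Theorem \ref{2:1}, so the essential work is to confirm that the residue data of $\hat{\mathcal D}$ matches the $\mu_\gamma$ required there and that the cut placement is compatible with the equal-weight pairing $\rho_0=\rho_2$, $\rho_1=\rho_3$.
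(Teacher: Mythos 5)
Your proposal follows essentially the same route as the paper's proof: it builds $\mathcal D$ from the abelianization correspondence of Theorem \ref{2:1}, uses oddness to descend to the $\lambda$-disc, reads off the nilpotent pole at $\lambda=0$ from conditions (1)--(2), regularity from (3), unitarity plus Schwarzian reflection from (4), and the Sym-point triviality on the cut torus from (5), before invoking Theorem \ref{lifting_theorem}. The only cosmetic difference is that the paper isolates the Sym-point monodromy computation in Lemma \ref{Lem:torus-closing} (the explicit diagonal Fuchsian system with cancelling residues around the puncture pairs $\{0,\infty\}$ and $\{1,m\}$) rather than via Lemma \ref{Solving-an-ODE}, and establishes nilpotency of the leading Higgs term via its determinant having only a first-order pole in $\lambda$.
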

 \begin{figure}
\centering
\includegraphics[width=0.5\textwidth]{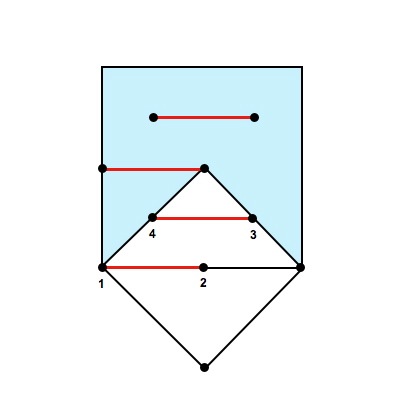}

\caption{
\footnotesize
The white rectangle represents the fundamental piece of the torus $T^2$ which is doubly covered by the blue rectangle representing $\hat T^2$. The $4$ singular points in $T^2$ becomes $8$ singular points on $\hat T^2,$ and the CMC immersion is well-defined
 on $\hat T^2$ without 4 cuts connecting pairs of singular points.}
\label{4cut}
\end{figure}

\begin{proof}
We want to apply Theorem \ref{lifting_theorem} to obtain the CMC surface. 
For $\rho_0, \rho_1 \in ]0, \tfrac{1}{2}[$ the connections given by \eqref{abel_connection} are generically irreducible and $\hat{\mathcal D}$ induces a meromorphic map $\mathcal D$ into the moduli space of flat $\SL(2,\C)$-connections 
$\mathcal{A}^2(T^2 \setminus (l_1 \cup l_2))$.  Since the $\hat{\mathcal D}(\sigma(\xi)) = (\mathcal D(\xi))^*$, where $()^*$ is the gauge equivalence class of the dual connection on the dual line bundle, we obtain $\mathcal D(\sigma \xi) = \hat {\mathcal D}(\xi)$ by \eqref{abel_connection}. By Theorem \ref{2:1} $\mathcal D$ is a well defined map from $D_{1+\epsilon}\setminus\{0\}$ to
$\mathcal{A}^2_{\rho_0,\rho_1,\rho_0,\rho_1}(\CP^1\setminus\{0,1,m,\infty\}).$ 

Condition $(1)$ and $(2)$ ensure that the family of connections has the right asymptotic behavior at $\lambda= 0.$
To see this
 let $\xi^2 = \lambda$ be local coordinates for $\Sigma$ around $\lambda^{-1}(0).$ Then for $\chi(0) = 0,$ there exists by Theorem \ref{2:1} a local map $\alpha_0(\xi)$ with a first order pole at $\xi=0$ such that the the corresponding equivalence class $[\nabla^{\chi,\alpha_0}] $ given by \eqref{abel_connection} is well defined in $\mathcal A^2(T^2\setminus\{\omega_0, ..., \omega_3\})$\footnote{If the map $\mathcal D$ has no pole at $\lambda =0$ then it has also no pole at $\lambda= \infty$. Thus it would be a constant map with $\chi(0) = 0$ and therefore $\mathcal D$ cannot meet condition $(5)$.}.  The difference between $\mathcal D$ and $[\nabla^{\chi,\alpha_0}]$ is given by a family of Higgs fields $\Psi(\xi)$ (in diagonal form w.r.t. $E\oplus E^*$) and first order poles in $\xi$ on the diagonals.  Thus its determinant is given by 
$$\det (\Psi(\xi)) = \tfrac{c^2}{\xi^2} dz + \text{holomorphic terms in }\xi,$$

i.e., it has a first order pole in $\lambda.$
 But $\mathcal D$ and $[\nabla^{\chi,\alpha_0}]$ are well defined (meromorphic) in terms of $\lambda.$ Thus $\Psi(\xi)$ can be meromorphically parametrized in $\lambda.$ Let 
 
  $$\Psi(\lambda) \sim \tfrac{1}{\lambda} \Psi_{-1} + \text{ holomorphic terms in }  \lambda,$$
 
Then det$(\tfrac{1}{\lambda}\Psi_{-1}) \sim  \tfrac{c^2}{\lambda}dz $  yields det$\Psi_{-1} = 0$. Thus $\Psi_{-1}\in\mathfrak {sl} (2, \C)$ has zero determinant and hence it must be nilpotent.
 
By condition $(3)$ there are no further singularities of $\mathcal D$ on $D_{1+\epsilon}$ other than at $\lambda = 0$ and by $(4)$  the map $\mathcal D$ maps into the real subvariety of $\mathcal A^2$ consisting of the equivalence classes of the $SU(2)$ connections. Thus by the Schwarzian reflexion principle there exist a continuation of $\mathcal D$ to $\C_*.$ It remains to show that the Sym point conditions are fulfilled on $\hat T^2\setminus (l_1\cup l_2 \cup l_3 \cup l_4)$, which we proof in the following Lemma.
\end{proof}

 \begin{Lem}\label{Lem:torus-closing}
The condition (5) in Theorem \ref{slitting_tori} ensures that the
monodromy of the corresponding flat
connection $\nabla^{\lambda_j}$ at a Sym point $\lambda_j\in S^1$ is trivial on $\hat T^2\setminus (l_1 \cup l_2\cup l_3 \cup l_4).$
\end{Lem}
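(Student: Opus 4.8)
The plan is to verify the extrinsic closing (Sym point) condition of Theorem \ref{lifting_theorem} at the two points $\lambda_j=\lambda(\xi_j)$, $j=1,2$, singled out by condition (5): I must show that $\nabla^{\lambda_j}$ has trivial monodromy on $\hat T^2\setminus(l_1\cup l_2\cup l_3\cup l_4)$. First I would observe that since $\chi(\xi_j)\in\tfrac12\Lambda$ is a half-period, Theorem \ref{2:1} together with the preceding Remark shows that the induced parabolic structure is strictly semistable, so $\nabla^{\lambda_j}$ is reducible. Pulling the abelianized connection \eqref{abel_connection} back along $\hat T^2\to T^2$, it therefore splits as $\mathcal L\oplus\mathcal L^{-1}$ for a meromorphic flat line bundle connection $\mathcal L$ on $\hat T^2$ with simple poles at the eight preimages of $\omega_0,\dots,\omega_3$ and residues $\pm\rho_i$ there. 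Triviality of the $\SL(2,\C)$-monodromy is then equivalent to the vanishing of the abelian holonomy character $c\colon\pi_1\big(\hat T^2\setminus(l_1\cup\dots\cup l_4)\big)\to\C^*$ of $\mathcal L$.

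Next I would analyse the topology of the cut surface. A regular-neighbourhood computation gives $\chi\big(\hat T^2\setminus(l_1\cup\dots\cup l_4)\big)=-4$, so its fundamental group is free of rank five, generated by the four loops $\delta_1,\dots,\delta_4$ encircling the individual slits together with two global torus cycles $\gamma_a,\gamma_b$, subject to the single relation $[\gamma_a,\gamma_b]\,\delta_1\delta_2\delta_3\delta_4=1$. Since $c$ is abelian-valued, this relation is automatic once each $c(\delta_i)=1$, so it suffices to evaluate $c$ on the six generators.

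For the slit loops I would use the explicit endpoints: $l_1,l_3$ join the two $\omega_0$-points to the two $\omega_2$-points, while $l_2,l_4$ join the two $\omega_1$-points to the two $\omega_3$-points. Here $c(\delta_i)=\exp(-2\pi i(r+r'))$, where $r,r'$ are the residues of $\mathcal L$ at the two endpoints of $l_i$. The deck transformations of the branched cover $\hat T^2\to\CP^1$ permute the two monodromy eigenlines, and a bookkeeping of their action on the eight poles shows that the residues at the two endpoints of each slit are opposite. Hence $r+r'=\pm(\rho_0-\rho_2)=0$ for $l_1,l_3$ and $r+r'=\pm(\rho_1-\rho_3)=0$ for $l_2,l_4$, so that the equal-weight hypotheses $\rho_0=\rho_2$, $\rho_1=\rho_3$ are precisely what trivialise the slit monodromies, giving $c(\delta_i)=1$.

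The decisive step, where condition (5) enters, is the evaluation of $c$ on the global cycles $\gamma_a,\gamma_b$. Writing the diagonal entry of \eqref{abel_connection} at the spin point as the sum of the unitary lattice form $\alpha^{MS}_{\rho_0,\rho_1}\,dw-\chi\,d\bar w$ and a meromorphic part carrying the residues $\pm\rho_i$, the character $c(\gamma)$ factors accordingly. For $\chi(\xi_j)$ equal to either prescribed half-period, a direct integration gives lattice holonomy $-1$ on both $\gamma_a$ and $\gamma_b$, whereas the two remaining half-periods $0$ and $[\tfrac{\pi i}{\tau-\bar\tau}]$ would give $+1$. The plan is to show, via an explicit $\vartheta$-quotient parallel section of $\mathcal L$ on the cut surface — in the spirit of the global section $\tilde h^{\mp n_0}$ of Lemma \ref{Solving-an-ODE} — that the meromorphic part contributes a compensating $-1$ on each cycle, so that $c(\gamma_a)=c(\gamma_b)=1$. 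The \textbf{main obstacle} is precisely this cancellation: one must check that the two half-periods named in condition (5), and not the other two, are those for which the unitary lattice holonomy offsets the residue periods on the double cover $\hat T^2$. Once $c\equiv 1$ on all six generators, the monodromy of $\nabla^{\lambda_j}$ on $\hat T^2\setminus(l_1\cup\dots\cup l_4)$ is trivial and the Sym point condition is verified.
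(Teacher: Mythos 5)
Your overall strategy is the same as the paper's: condition (5) forces strict semistability of the parabolic structure at the Sym points, hence the unitary connection $\nabla^{\lambda_j}$ is reducible, and one then checks an abelian monodromy character on generators of $\pi_1$ of the cut torus. Your explicit treatment of the four slit loops $\delta_i$ (the residues at the two endpoints of each slit are opposite, so the slit monodromies are trivial precisely because $\rho_0=\rho_2$ and $\rho_1=\rho_3$) is a genuine addition: the paper records only the horizontal and vertical monodromies, which it identifies with the monodromies around the puncture pairs $\{0,\infty\}$ and $\{1,m\}$ on $\CP^1$.

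There are, however, two gaps. First, your opening claim that ``$\chi(\xi_j)\in\tfrac12\Lambda$ is a half-period, hence the parabolic structure is strictly semistable'' is not what Theorem \ref{2:1} gives: at $\gamma\in\Lambda$ one has $\mu_\gamma=\pm(\rho_0+\rho_1)\neq 0$, and at the remaining nonzero half-period $\mu_\gamma=\pm(\rho_0-\rho_1)$, so for generic weights only the two half-periods named in condition (5) satisfy $\mu_\gamma=0$ and admit a reducible unitary connection at all. This semistability dichotomy, not the sign of the lattice holonomy you compute later, is the mechanism by which (5) selects those two points; your proof locates the role of (5) in the wrong place. Second, and more seriously, your ``decisive step'' writes the diagonal entry of the limiting reducible connection as $\alpha^{MS}_{\rho_0,\rho_1}\,dw-\chi\,d\bar w$ plus a meromorphic residue part. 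But at $\chi=\gamma\in\tfrac12\Lambda$ the normal form \eqref{abel_connection} degenerates and the connection only extends after a singular gauge (Theorem \ref{2:1}), so the invariant line subbundles of the limit and the holomorphic part of their induced connections cannot simply be read off from the generic abelianized form; an undetermined holomorphic shift of $\alpha$ would multiply the global-cycle holonomies by an arbitrary factor and destroy the claimed cancellation of the two $-1$'s. The paper closes exactly this point by descending to $\CP^1$: a reducible unitary Fuchsian system with local exponents $\pm\hat\rho_i$ is a direct sum of logarithmic connections on the trivial line bundle over $\CP^1$, hence is completely determined by its residues, which must cancel in the pairs $\{0,\infty\}$ and $\{1,m\}$; triviality of the global-cycle monodromies is then immediate. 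You need either to import that determination of the diagonal entries or to pin down the limit connection on $\hat T^2$ by some other means before your $\vartheta$-quotient computation can be carried out.
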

 \begin{proof}
 The connection $\nabla^{\lambda_j}$ is unitary and has a strictly semi-stable induced parabolic structure by condition $(5)$ and Theorem \ref{2:1} (see also $\S$2.4 in \cite{HeHe}).
 The corresponding Fuchsian system on $\CP^1\setminus\{0,1,m,\infty\}$ is diagonal and a computation shows that its diagonal entries have the following form
 \[d\pm\hat\rho_0\frac{dz}{z}\pm\hat\rho_1(\frac{dz}{z-1}-\frac{dz}{z-m}),\]
 where $\hat\rho_i=\tfrac{\rho_i}{2}+\tfrac{1}{4}.$ The ''vertical monodromy'' on $\hat T^2\setminus (l_1 \cup l_2\cup l_3 \cup l_4)$
 is then given by the monodromy around the punctures $0$ and $\infty$ on $\CP^1$ and  the ''horizontal monodromy''
 by the monodromy around the punctures $1$ and $m.$ Both of these monodromies are trivial (independently of the actual sign for the line bundle connection).
 \end{proof}

\subsection{Rational weights $\rho_0$ and $\rho_1$}\label{Rational_weights}
We are primary interested in closed surfaces. For rational weights $\rho_0,\rho_1$ we can
guarantee that the analytic continuation of a CMC surface provided by Theorem \ref{slitting_tori}
is automatically closed. Moreover, we can control its genus as well as its branch points and umbilics.

\begin{The}\label{branched_CMC}
Let $\rho_0,\rho_1\in\Q$ with $\tfrac{p_0}{q_0}=\tfrac{2\rho_0+1}{4}$ and $\tfrac{p_1}{q_1}=\tfrac{2\rho_1+1}{4}$ for coprime $p_0,q_0$ and $p_1,q_1,$ respectively.
Set \[k=\begin{cases}
q_0-1\\
\tfrac{q_0}{2}-1\end{cases},\; r_0=\begin{cases}
2p_0& \text{if $q_0$ is   odd}\\
p_0& \text{if $q_0$ is even }\end{cases}\]
and 
\[l=\begin{cases}
q_1-1\\\
\tfrac{q_1}{2}-1 \end{cases}, r_1=\begin{cases}
2p_1& \text{if $q_1$ is   odd}\\
p_1& \text{if $q_1$ is even }\end{cases}.\] 

Then, the analytic continuation  of a CMC surface 
 \[f\colon T^2\setminus (\cup_{i=1}^4 l_i)\to S^3\]
given in Theorem \ref{slitting_tori} is a compact CMC surface of genus
$g=k\cdot l.$ Moreover,
the surface has
$2k+2$
many umbilical branch points $P_i$ of branch order
$l-r_1$
 and umbilical  order $r_1-1,$
and
$2l+2$
many umbilical branch points $Q_j$ of branch order 
$k-r_0$
 and umbilical  order $r_0-1.$
 \end{The}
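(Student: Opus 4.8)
The plan is to combine the analytic-continuation statement of Theorem \ref{slitting_tori} with the explicit covering description of $(k,l)$-symmetric surfaces from Section \ref{kl-RS} and to read off all branch and umbilic data from the local behavior of $\mathcal D$ at the four punctures $z_i = 0, 1, m, \infty$. First I would translate the rationality hypotheses into statements about the local monodromy of the associated family at the Sym points. By Lemma \ref{Lem:torus-closing}, at a Sym point $\lambda_j$ the Fuchsian system is gauge equivalent to the diagonal connection
\[
d \pm \hat\rho_0 \frac{dz}{z} \pm \hat\rho_1\Bigl(\frac{dz}{z-1} - \frac{dz}{z-m}\Bigr),
\qquad \hat\rho_i = \tfrac{\rho_i}{2} + \tfrac14 = \tfrac{p_i}{q_i}.
\]
Thus the local monodromy around $z=0,\infty$ is rotation by the rational angle $2\pi\hat\rho_0 = 2\pi p_0/q_0$, and around $z=1,m$ by $2\pi p_1/q_1$. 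A rational angle forces the surface to close on a finite covering: I would identify the minimal covering $h\colon M \to \CP^1$ on which all four local monodromies become trivial. Invoking Lemma \ref{Solving-an-ODE}, the pullback of the diagonal connection is trivial precisely on the $(k+1)(l+1)$-fold covering branched to order $l$ over $0,\infty$ and order $k$ over $1,m$, where $k,l$ are determined by the denominators exactly as in the statement (with the two cases according to the parity of $q_i$, reflecting whether $-\Id \in SU(2)$ is already in the image of the local monodromy).

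Next I would verify that this covering $M$ is precisely the $(k,l)$-symmetric Riemann surface of the diagram in Section \ref{kl-RS}, so that its genus is $g = k\cdot l$ by Riemann--Hurwitz: the deck group splits as $\Z_{k+1}\times\Z_{l+1}$ with the advertised fixed-point data, and the factor $\Z_{k+1}$ arises from the branching over $1,m$ while $\Z_{l+1}$ arises from the branching over $0,\infty$. Since the Hopf differential computed earlier is the pullback of a nonzero multiple of $(dz)^2/(z(z-1)(z-m))$, and since the immersion extends analytically across the cuts $l_1,\dots,l_4$ once the monodromy trivializes on $M$, the surface $f$ closes up to a compact CMC surface. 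The preimages of $1,m$ are the $2k+2$ points $P_i$ and the preimages of $0,\infty$ are the $2l+2$ points $Q_j$, matching the stated counts.

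The heart of the computation is the local analysis at the punctures to extract the branch orders and umbilic orders. Here I would use that the nilpotent residue $\Psi$ of Theorem \ref{lifting_theorem} governs the branch points of $f$ as its zeros, while the umbilics are the zeros of the Hopf differential $Q$. Near a puncture $z_i$, the covering $h$ has local form $w \mapsto w^{d_i}$ with $d_i = k+1$ over $1,m$ and $d_i = l+1$ over $0,\infty$; pulling back $\Phi$ (whose local vanishing order I read off from $\Psi$ and the eigenvalue data $\hat\rho_i$) and combining with the ramification of $h$ gives the branch order of $f$. The comparison of the pole/zero orders contributed by $\Phi$ against those forced by the covering produces branch order $l - r_1$ at the $P_i$ and $k - r_0$ at the $Q_j$, with $r_0,r_1$ the numerators (doubled in the odd case), since the two parity cases differ precisely by whether the numerator or twice the numerator measures the rotation of $\Phi$. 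The umbilic orders $r_1 - 1$ and $r_0 - 1$ then follow by counting the vanishing order of the pulled-back Hopf differential $h^*Q$ at these ramification points, using $\ord_{z_i}Q = -1$ together with the ramification index.

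I expect the main obstacle to be the bookkeeping in this final local analysis, in particular correctly matching the two parity cases for $q_0,q_1$: the distinction between $k = q_0 - 1$ versus $k = \tfrac{q_0}{2}-1$ (and likewise $r_0 = 2p_0$ versus $p_0$) hinges on whether the scalar $-\Id$ already lies in the cyclic group generated by the local monodromy, which changes the degree of the minimal trivializing cover by a factor of two. Getting the ramification index of $h$ and the vanishing order of $\Phi$ to combine into the clean formulas $l - r_1$ and $r_1 - 1$ requires care, since a sign error or an off-by-one in the eigenvalue normalization $\hat\rho_i = \rho_i/2 + 1/4$ would corrupt every subsequent count. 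Once the cover $M$ is correctly identified and the local exponents are pinned down, the remaining assertions—compactness, genus, and the total number of marked points—follow from Riemann--Hurwitz and the structural results of Section \ref{kl-RS} without further difficulty.
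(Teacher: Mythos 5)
Your proposal follows essentially the same route as the paper: reducibility of $\nabla^{\lambda_j}$ at the Sym points into explicit diagonal Fuchsian systems, triviality of their pullback to the $(k,l)$-symmetric cover via Lemma \ref{Solving-an-ODE} (the paper implements your ``$-\Id$ in the local monodromy'' point by tensoring the whole family with the auxiliary connection $d^S$, which does not change the resulting surface), genus by Riemann--Hurwitz, and a local computation at the punctures for the branch and umbilic orders. The paper itself outsources that last local computation to Theorem 5 of \cite{HeHeSch}, so your outline is at the same level of detail as the published argument.
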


\begin{proof}
We first prove that the analytic continuation of the CMC surface as given in Theorem \ref{slitting_tori}
 is closed on the $(k,l)$-symmetric covering $M$ of the 4-punctured sphere determined by the elliptic covering $T^2\to\CP^1.$
 Then the statement concerning the genus automatically follows from Riemann-Hurwitz. 
The key point is that the extrinsic closing condition (condition (5) in Theorem \ref{slitting_tori}) guarantees that the connections are reducible (as well as unitary) at the Sym points $\lambda_j$,
see the proof of Lemma \ref{Lem:torus-closing}.
The connection $\nabla^{\lambda_j}$ is thus gauge equivalent to a Fuchsian system on the 4-punctured sphere which reduces
to the direct sum of flat line bundle connections.
 In fact, it can be computed that $\nabla^{\lambda_j}$ is gauge equivalent to
$$  d+ \begin{pmatrix}\pm\frac{n_0}{2l+2}\frac{dz}{z}\pm \frac{n_1}{2k+2}(\frac{dz}{z-1}-\frac{dz}{z-m}) & 0 \\0& \mp\frac{n_0}{2l+2}\frac{dz}{z}\mp \frac{n_1}{2k+2}(\frac{dz}{z-1}-\frac{dz}{z-m}) \end{pmatrix}$$
where $n_0,n_1\in\tfrac{1}{2}\Z\setminus\Z.$
Thus by  tensorizing  the whole family of flat connections $\nabla^{\lambda}$ with the diagonal connection
 \[d^S := d+ \frac{1}{2l+2}\frac{dz}{z}+ \frac{1}{2k+2}(\frac{dz}{z-1}-\frac{dz}{z-m}),\]
we obtain that the connections at the Sym points $\nabla^{\lambda_j} \otimes d^S$ are trivial  by Lemma \ref{Solving-an-ODE}.

The branch orders and umbilic orders of the points $P_i$ and $Q_j$ follows by an analogous
computation as in Theorem 5 of \cite{HeHeSch}.
\end{proof}

\begin{Rem}
We obtain immersed (unbranched) CMC surfaces for spectral data corresponding to $\rho_0 = \tfrac{k-1}{2k+2}$ and $\rho_1 = \tfrac{l-1}{2l+2}$ for all integers $k$ and $ l.$
\end{Rem}
\section{The generalized Whitham flow}\label{whitham flow}

The key issue of the theory is to construct appropriate spectral data satisfying all conditions of Theorem \ref{slitting_tori}. The main problem is to satisfy the unitarity condition for $\nabla^{\lambda}$ along $\lambda \in S^1.$ This is due to the implicitness of the Narasimhan-Seshadri section for higher genus surfaces. In \cite{HeHeSch} we introduced a method to overcome this problem for $\Z_{g+1}$ symmetric surfaces by flowing from know spectral data of a CMC torus towards higher genus data while preserving the unitarity along $S^1$ using the weight $ \rho$ as the flow parameter. In this section we want to generalize this flow to yield  $(k,l)$-symmetric surfaces. We fix the ratio of the weights $\rho_0$ and $\rho_1$ to be $q \in \R_+,$ which we consider as the flow direction. For rational directions we obtain closed but possibly branched surfaces at rational times and immersed surfaces for integer times.  Compared to the situation \cite{HeHeSch} the initial 
CMC immersion is not well-defined on the torus $T^2$ corresponding to the abelianization but only on its double covering $\hat T^2.$ Thus we need to ensure that the spectral data for CMC tori computed in \cite{HeHeSch} are still well defined as a map into the Jacobian of $T^2,$ which double covers the Jacobian of $\hat T^2.$ Because Lemma \ref{a^u_symmetries} holds for rhombic tori, we can use the same function spaces as in \cite{HeHeSch} and the short time existence of the flow for every given flow direction follows analogously. 

\begin{Rem}
The CMC surfaces with $\Z_{g+1}$ symmetry constructed in \cite{HeHeSch} can be considered as a special case of the Theorems \ref{slitting_tori} and \ref{branched_CMC} for $\rho_0= \rho$ and $\rho_1 = 0.$ In particular, they all have an additional commuting $\Z_2$-symmetry with $2g+2$ fix points.
\end{Rem}

\subsection{Flowing homogeneous CMC tori}\label{Tori_spec_gen_0}
Homogenous tori are the simplest CMC tori in $S^3.$ They are given by the product of two circles with different radii and can be parametrized by 
$$f(x,y) = (\tfrac{1}{r} e^{ir x}, \tfrac{1}{s} e^{isy}) \subset S^3 \subset \C^2, \quad r,s \in \R, \quad r^2+ s^2 = 1.$$
Thus (simply wrapped) homogenous tori always have rectangular conformal types and 
we identify $\hat T^2=\C/\Gamma$ where $\Gamma=2\Z+2\tau \Z$ for some
$\tau\in i \R^{\geq1}$. The Jacobian of $\hat T^2$ is  given by 
$Jac(\hat T^2)=\C/(\tfrac{\pi i}{2 \tau}\Z+\tfrac{\pi i}{2}\Z)$

Homogenous tori are of spectral genus $0$ thus their spectral curve $\Sigma$ is given by the algebraic 
equation
\[\xi^2=\lambda.\]
Since $\CP^1$ is simply connected, every meromorphic map
\[\chi\colon \CP^1  \to Jac(\hat T^2) = \C / \hat \Lambda\]
lifts to  $\C$ as a meromorphic function
$\hat{\chi}$. 

The torus $T^2$ used for the abelianization is given by $\C / ((1+ \tau) \Z + (1-\tau)\Z) $ and is doubly covered by $\hat T^2.$ Thus $\chi$ is also a well defined map into the bigger Jacobian $Jac(T^2).$
In \cite{HeHeSch} we have computed $\chi$ (after a shift) to be 
\begin{equation}\label{hom_tori_spec}
\hat{\chi}(\xi)=\tfrac{\pi i R}{4\tau}\xi d\bar w
\end{equation}
for $R= \sqrt{1+\tau\bar\tau}$.

The extrinsic closing condition is that
there exist of four points
\[\pm\xi_1,\pm\xi_2\in S^1\subset\C^*\]
with
\[\hat{\chi}(\pm\xi_{1,2})\in \tfrac{\pi i (1+\tau)}{4\tau}d\bar w+\hat \Lambda.\]

This is exactly the closing condition (5) in Theorem \ref{slitting_tori}, since $\hat \Lambda= \tfrac{\pi i}{2 \tau}\Z+\tfrac{\pi i}{2}\Z.$
 The proof of the following theorem works analogously to the proof of Theorem $6$ in \cite{HeHeSch}.
\begin{The}\label{flow_homog}
Let $q \in R_+$ and $\tau\in i\R^{\geq1}.$ Further, consider the homogenous CMC torus $f: \hat T^2 = \C / (2\Z + 2\tau \Z) \rightarrow S^3$  with $\chi(\xi)=R_0\tfrac{\pi i}{4\tau}\xi,$ where
 $R_0=\sqrt{1+\tau\bar\tau} \in [\sqrt{2}, 2[$ and $T^2 = \C/ (1+\tau)\Z+ (1-\tau)\Z).$
Moreover, let $\rho_0 = t$ and $\rho_1 = q  t$. Then for $t \sim 0$ there exist  a unique
 $\chi^t\colon\{\xi\mid |\xi|<1+\epsilon\}\to Jac(T^2)$ 
 and a lift $\mathcal D^t\colon\{\xi\mid |\xi|<1+\epsilon\}\to\mathcal A^1(T^2)$
 satisfying the conditions of Theorem \ref{slitting_tori}.
 
Thus there exists a flow of closed CMC surfaces $f^t$ with boundaries. For rational $t,q\in\Q$ the analytic continuation of the surfaces are closed and give compact (possibly branched) CMC surfaces. 
 \end{The}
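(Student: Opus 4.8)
The plan is to obtain the family $(\chi^t,\mathcal D^t)$ as the zero locus of a nonlinear operator depending real-analytically on the parameter $t$ and to apply the implicit function theorem, exactly in the spirit of the proof of Theorem~6 in \cite{HeHeSch}. At $t=0$ the weights $\rho_0=\rho_1=0$ vanish and the homogeneous torus, with $\chi$ given explicitly by \eqref{hom_tori_spec} together with the associated unitary line bundle lift $\mathcal D^0$, provides a solution satisfying all conditions of Theorem~\ref{slitting_tori}; the flow is its deformation in $t$. Since the argument is meant to run parallel to \cite{HeHeSch}, the task reduces to two points: setting up the appropriate Banach spaces in the present two-weight situation on the double cover, and checking that the linearization of the unitarity equation at $t=0$ is invertible.

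For the function-space setup I first invoke the reflection-principle remark following Theorem~\ref{lifting_theorem}: a map $\mathcal D$ obeying the unitarity condition~(4) is determined by its restriction to the closed disc $\{|\xi|\le 1\}\subset\Sigma$, so I work there and recover the full $\C^*$-family by Schwarzian reflection. I take as domain a Banach manifold of odd (with respect to $\sigma$) holomorphic maps $\chi\colon\{|\xi|<1+\epsilon\}\to Jac(T^2)$ normalized by $\chi(\lambda^{-1}(0))=0$, together with meromorphic lifts $\mathcal D$ whose only singularities are a first-order pole over $\lambda^{-1}(0)$ and, at each $\xi_0$ with $\chi(\xi_0)\in\tfrac{1}{2}\Lambda$, a first-order pole obeying the spin expansion \eqref{a_spin_expansion}; for $t$ small these latter points vary real-analytically with $\chi$, so the singular part is carried along continuously. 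This bakes conditions (1)--(3) into the domain. The only genuine novelty relative to \cite{HeHeSch} is that the initial $\chi$ of \eqref{hom_tori_spec} descends only to $T^2$, not to $\hat T^2$; since $Jac(T^2)$ double covers $Jac(\hat T^2)$ and, by Lemma~\ref{a^u_symmetries}, the section $\alpha^{MS}_{\rho_0,\rho_1}$ retains its oddness and (for $\tau\in i\R$) its reality on the rhombic torus $T^2$, the very function spaces of \cite{HeHeSch} remain available.

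The nonlinear operator $\mathcal F(t,\chi)$ records along $\lambda^{-1}(S^1)$ the defect $\mathcal D(\xi)-\alpha^{MS}_{t,qt}(\chi(\xi))$, i.e.\ the failure of the unitarity condition~(4), supplemented by the two finite-dimensional Sym-point constraints of condition~(5). At $t=0$ the section $\alpha^{MS}_{0,0}$ is the linear unitarity relation for flat line bundles on $T^2$, and the homogeneous-torus data solve $\mathcal F(0,\cdot)=0$. Differentiating in $\chi$ yields a linear Riemann--Hilbert-type boundary operator on the disc: the derivative of the real-analytic Narasimhan--Seshadri section $\alpha^{MS}$ couples the boundary values of the holomorphic variation of $\chi$ to those of its conjugate. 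Exactly as in \cite{HeHeSch}, the oddness and reality from Lemma~\ref{a^u_symmetries} force this operator to have index zero with trivial kernel and cokernel; passing to two weights with fixed ratio $q$ alters only the $\rho$-derivative of $\alpha^{MS}$ and leaves the $\chi$-linearization---hence its invertibility---unchanged.

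With invertibility in hand, the implicit function theorem produces for $t\sim 0$ a unique real-analytic family $(\chi^t,\mathcal D^t)$ with $\mathcal F(t,\cdot)=0$, satisfying conditions (1)--(5), whence Theorem~\ref{slitting_tori} yields the CMC immersions $f^t$ with boundary; for rational $t,q$ the analytic continuation closes up by Theorem~\ref{branched_CMC}. The main obstacle is precisely the invertibility of the linearized unitarity operator: the whole deformation hinges on it, and the essential point to verify is that the symmetry structure exploited in \cite{HeHeSch}---the oddness and the reality of Lemma~\ref{a^u_symmetries}---genuinely persists on the rhombic torus $T^2$ that only double covers $\hat T^2$, so that neither the passage to two independent weights nor the passage to the double cover disturbs the trivial-kernel, trivial-cokernel structure that makes the implicit function theorem applicable.
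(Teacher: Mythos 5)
Your proposal follows essentially the same route as the paper: the paper proves Theorem \ref{flow_homog} by observing that the homogeneous-torus data of \eqref{hom_tori_spec} satisfy the conditions of Theorem \ref{slitting_tori} at $t=0$, that $\chi$ remains well defined into $Jac(T^2)$ because $Jac(T^2)$ double covers $Jac(\hat T^2)$, and that Lemma \ref{a^u_symmetries} persists for rhombic tori so the same function spaces and implicit-function-theorem argument as in Theorem~6 of \cite{HeHeSch} apply verbatim. Your identification of the two genuinely new points --- the passage to the double cover and to two weights with fixed ratio $q$ --- and your resolution of both via Lemma \ref{a^u_symmetries} is exactly the content of the paper's (much terser) argument.
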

 \begin{Rem}\label{3d}
 Instead of fixing the ratio $q=\tfrac{\rho_1}{\rho_0}$ and using $t=\rho_0$ as a flow parameter, we could
 have considered $\rho_0$ and $\rho_1$ as two independent flow parameters for commuting flows. Hence, locally
 around the (stable) homogeneous CMC tori, there is a 2-dimensional space of admissible spectral data for CMC surfaces
 of the "same conformal type" determined by the branch images $0,1,m,\infty\in\CP^1.$ If we would also allow the conformal type to change, we would get a
 3-dimensional space with local parameters $(\rho_0,\rho_1,m)$ with $m\in S^1\subset\C$
 (corresponding to rhombic tori).
 \end{Rem}
   \begin{figure}
\centering
\includegraphics[width=0.23\textwidth]{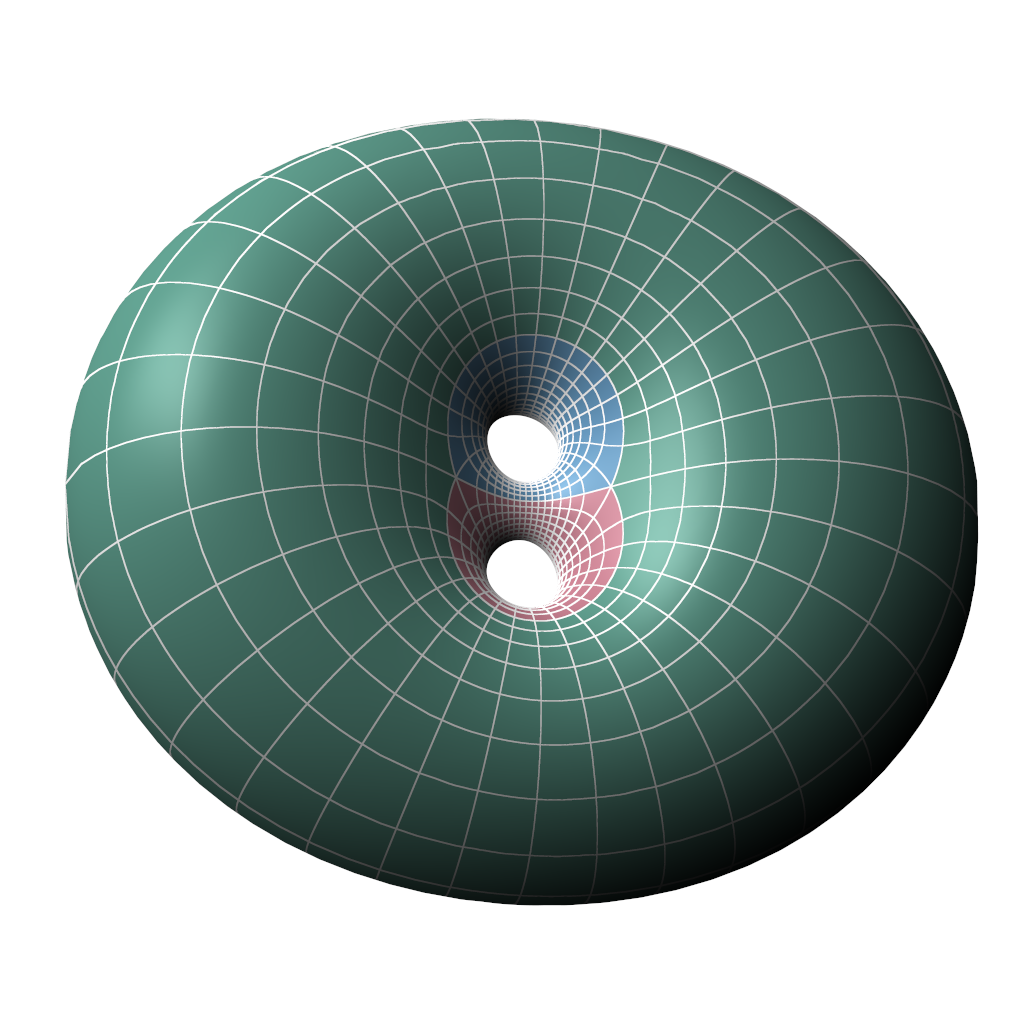}
\includegraphics[width=0.22\textwidth]{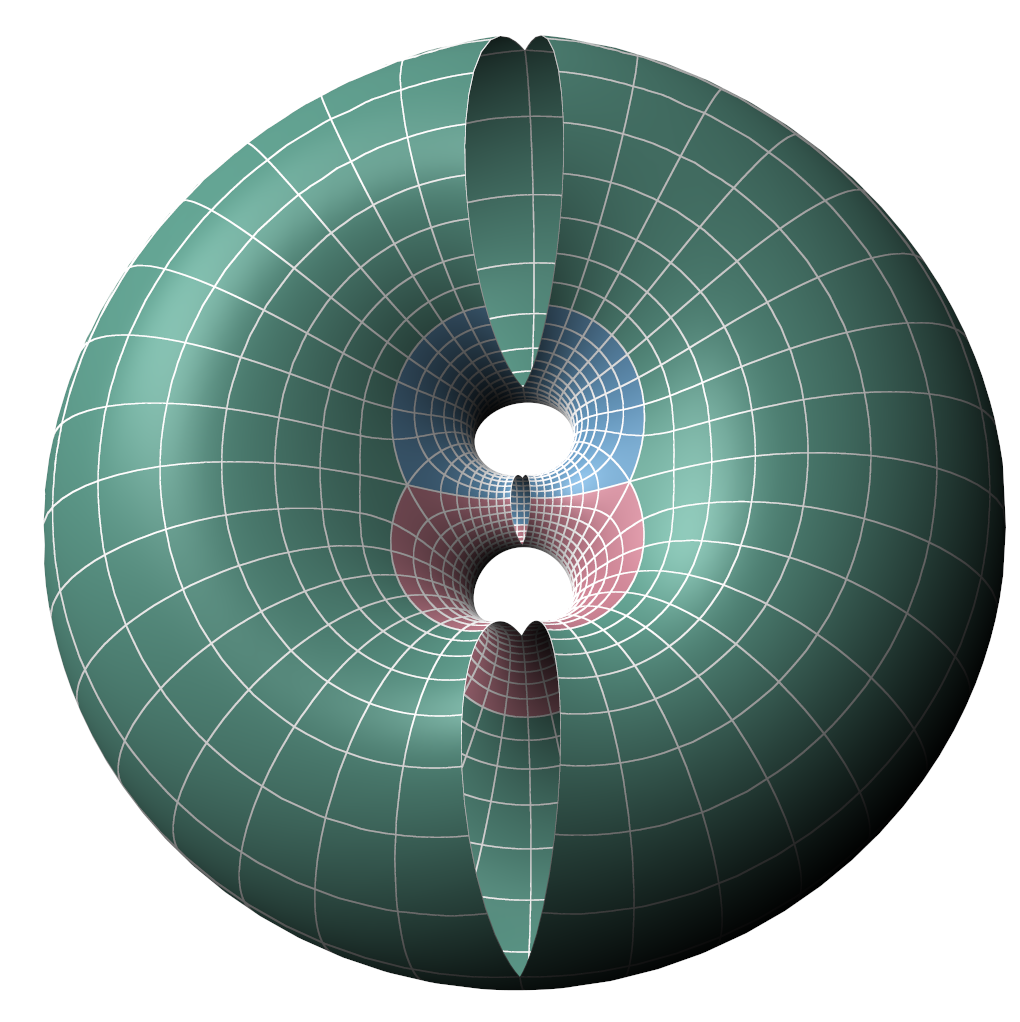}
\includegraphics[width=0.22\textwidth]{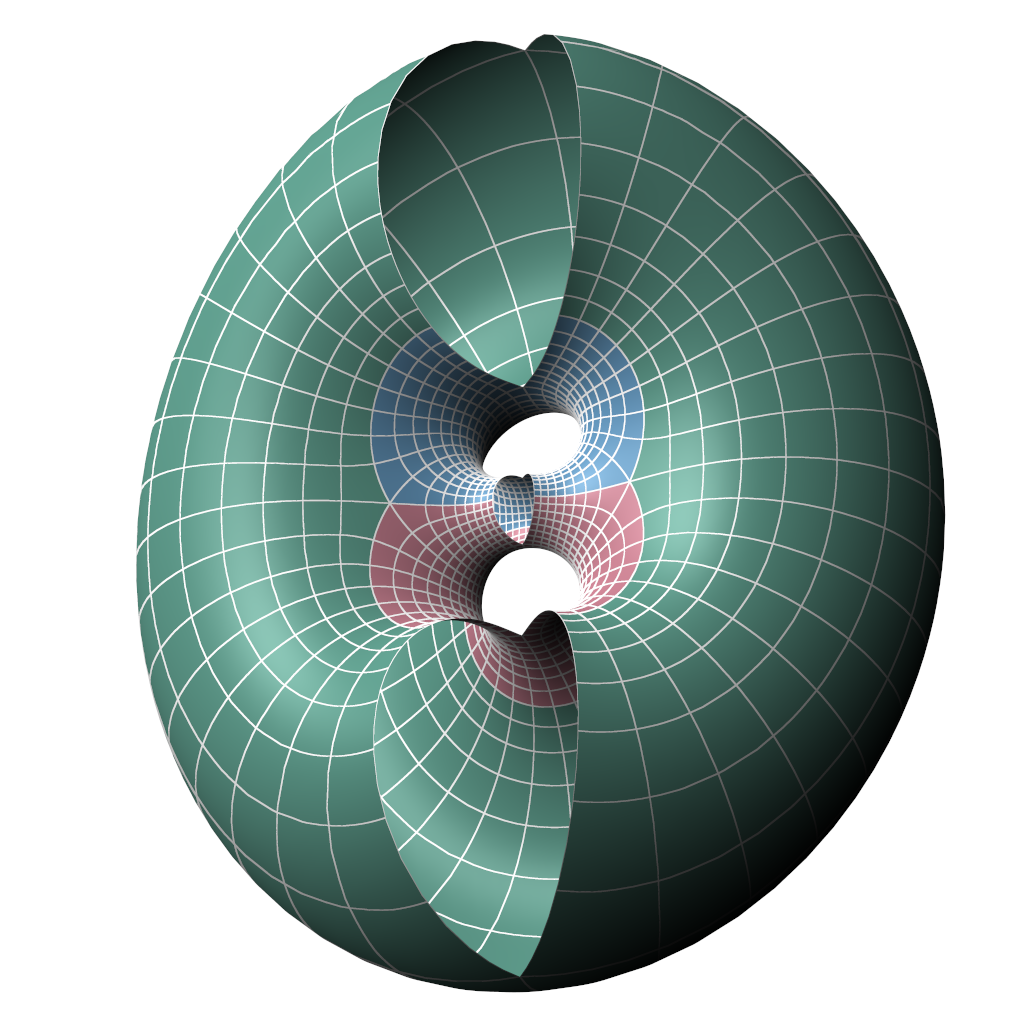}

\includegraphics[width=0.24\textwidth]{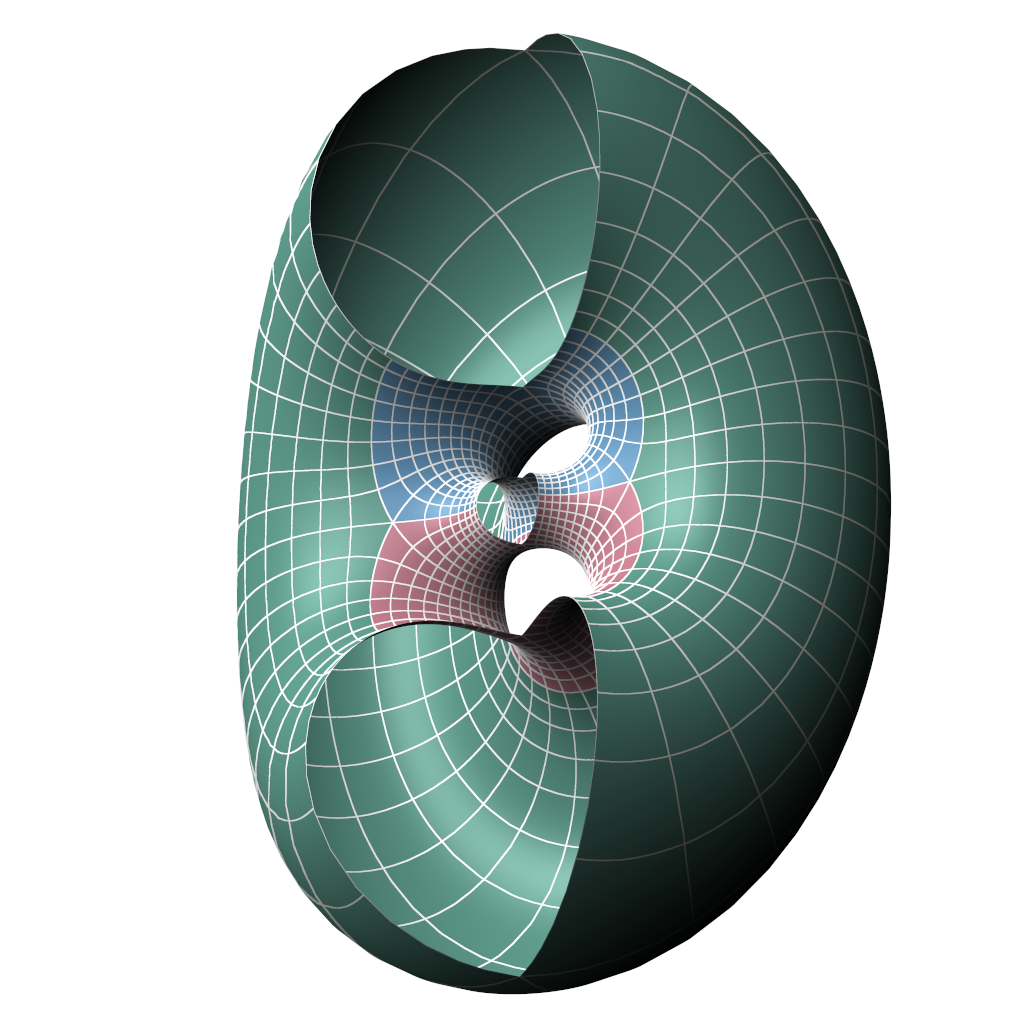}
\includegraphics[width=0.24\textwidth]{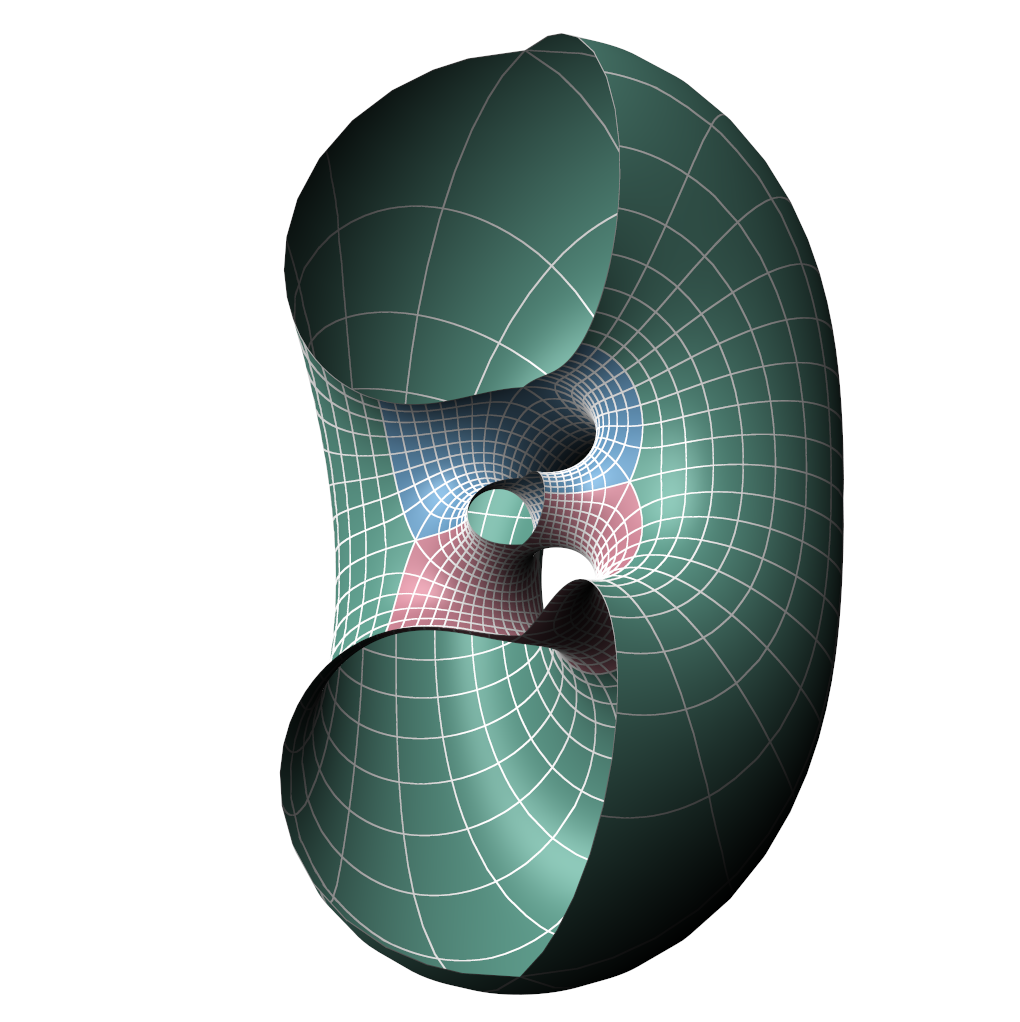}
\includegraphics[width=0.24\textwidth]{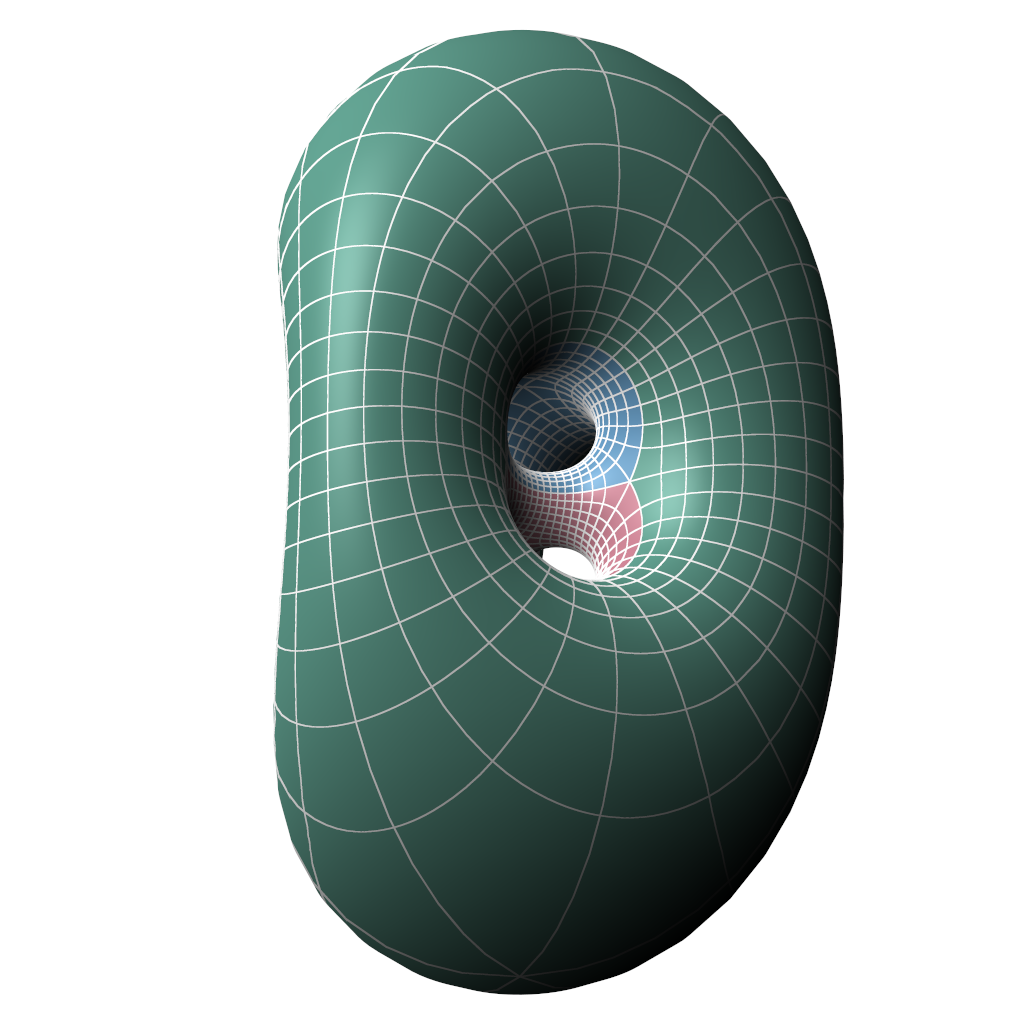}

\caption{
\footnotesize
The deformation of minimal surfaces from the Lawson surface $\xi_{2,1}$ of genus $2$ to $\xi_{2,2}$ of genus $4$. An order $2$ symmetry of the initial surface with six fixed points is ÒopenedÓ along three cuts until it reaches an order $3$ symmetry. The final surface is shown before and after the missing piece is filled in.
}
\label{fig:lawson}
\end{figure}

\begin{Exa}[ The Lawson surfaces $\xi_{k,l}$ ]
Choose $(k, l)\in \N^2$ and let $q = \tfrac{(l-1)(2k+2)}{(2l+2)(k-1)}.$
We consider  the corresponding generalized Whitham flow starting at the Clifford torus which has square conformal structure. This implies that there is an
additional symmetry on the Riemann surface $\hat T^2= (\C/ 2\Z + 2i\Z)$ and its Jacobian which is given by the multiplication by $i$. The Riemann surface $T^2$ is also a square torus and inherits the extra symmetry. Thus by a similar argument as in Example 3.1 of \cite{HeHeSch} we obtain that all surfaces $f^{t}$ within the flow starting at the Clifford torus are minimal. Let $S \subset \hat T^2=\C/(2\Z+2i\Z)$ be the square defined by the vertices $[0],[\tfrac{1+i}{2}], [1], [\tfrac{(1-i)}{2}] \in \hat T^2,$ see Figure \ref{4cut}. Since the coordinate lines of $\hat T^2$ are curvature lines of the corresponding surfaces (by reality of the Hopf differential), the diagonals are asymptotic lines. 
Because of the reality of the spectral data, the diagonals are geodesics on the surface.
Thus the image of the boundary of $S$ under the minimal immersion $f^{t}$ is a geodesic 4-gon. For $t=\tfrac{k-1}{2k+2}$ it is the geodesic polygon used in \cite{L} to construct the Lawson surfaces $\xi_{k,l}$, see also Figure \ref{fig:lawson}.

\end{Exa}

\subsection{Flowing $2$-lobed Delaunay Tori}\label{Tori_spec_gen_1}
Now we turn to the other class of initial surfaces considered in \cite{HeHeSch}, namely the $2$-lobed CMC tori of revolution. These are obtained  by the rotation of an elastic curve (with intrinsic period $2$)  in the upper half plane, viewed as the hyperbolic plane  $H^2,$ around the $x$-axis, where we consider $S^3$ as the one point compactification of $\R^3,$ see \cite{LHe, KSS}.  The conformal type of a CMC torus of revolution is again rectangular and determined by a lattice $\hat \Gamma=2\Z+2\tau \Z$ for some
$\tau\in i \R^{>0}$. The spectral curve of a CMC torus of revolution has genus $1$ and has rectangular conformal type. Thus it can be identified with
\[\Sigma=\C/(\Z+\tau_{spec}\Z),\] where $\tau_{spec}\in i\R.$
Let $\xi$ be the coordinate on the universal covering $\C$ of $\Sigma.$
Then it is shown in \cite{HeHeSch} that
 $\chi([0])=0 \in Jac(\hat T^2)$
is the trivial holomorphic line bundle and that  $d\chi$  satisfies 
\begin{equation}\label{weierstrass_zeta_L}
d\chi=(a\wp(\xi-\tfrac{\tau_{spec}}{2})+b) d\xi\otimes\frac{\pi i}{2\tau} d\bar w,
\end{equation}
where $\wp$ is the Weierstrass $\wp$-function on $\Sigma$ and $a$ and $b$ are real parameters uniquely determined by 
\begin{equation}\label{2lobe_spec_periods}
\begin{split}
\int_{\tau_{spec}}(a\wp(\xi-\tfrac{\tau_{spec}}{2})+b) d\xi&=0,\\
\int_{1}(a\wp(\xi-\tfrac{\tau_{spec}}{2})+b) d\xi&=2.
\end{split}
\end{equation}

Since $\tfrac{\pi i}{ \tau}$ is a lattice point of $Jac(T^2) = \C/ (\tfrac{\pi i(1+\tau)}{2\tau}\Z +\tfrac{\pi i (1-\tau)}{2\tau}\Z)),$  the map  $\chi$ is also well defined as a map from $\Sigma$ to $Jac(T^2).$ 

The Sym point condition is that there exist $\xi_{1/2} \in \Sigma$ such that 
\[\chi(\xi_{1/2})\in \frac{\pi i (1\pm\tau)}{4\tau}d\bar w+\hat\Lambda,\]
which is again exactly the Sym point condition we pose in Theorem \ref{slitting_tori}.
 Again the methods used to prove Theorem 7 in \cite{HeHeSch}  apply in our situation here and yield the following theorem.
\begin{The}\label{flow_Delaunay}
Let $q \in R_+$ and $\tau\in i\R$  give the conformal type of a 2-lobed Delaunay CMC torus with $\chi\colon\Sigma\to Jac(T^2)$ as described above.

Further, let $\rho_0 = t$ and $\rho_1 = q t$. Then for $t \sim 0$ there exist a unique 
rectangular
 elliptic curve $\lambda_+\colon\Sigma^+\to\CP^1$ together with an odd holomorphic map
 \[\chi^t_+\colon\lambda_+^{-1}(\{\lambda\mid |\lambda|<1+\epsilon\})\subset\Sigma^+\to Jac(T^2)\]
 and a lift $\mathcal D^t_-\colon\lambda_+^{-1}(\{\lambda\mid |\lambda|<1+\epsilon\})\to\mathcal A^1(T^2)$
 satisfying the conditions of Theorem \ref{slitting_tori} such that 
  the induced parabolic structure is stable (see Theorem \ref{2:1}) for all $\lambda$ inside the unit disc.

Moreover, for $t \sim 0$, there exist a unique rectangular
 elliptic curve $\lambda_{-}\colon\Sigma^{-}\to\CP^1$ together with an odd holomorphic map
 \[\chi^t_-\colon\lambda_{-}^{-1}(\{\lambda\mid |\lambda|<1+\epsilon\})\subset\Sigma^-\to Jac(T^2)\]
 and a lift $\mathcal D^t_-\colon\lambda_{-}^{-1}(\{\lambda\mid |\lambda|<1+\epsilon\})\to\mathcal A^1(T^2)$
 satisfying the conditions of Theorem \ref{slitting_tori} such that 
 there is exactly one $\lambda$ inside the unit disc where the induced holomorphic structure is unstable.

Thus for every $q\in\R_+$  there exists two distinct flows starting at the 2-lobed Delaunay tori through closed CMC surfaces with boundaries in the sense of Theorem \ref{slitting_tori}. For rational $t$ and $q$ the analytic continuation of the corresponding surface is closed and yield a compact (possibly branched) CMC surface. 
\end{The}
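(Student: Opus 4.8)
The plan is to realize the claimed spectral data as the solution of an implicit-function-theorem problem with parameter $t$, exactly mirroring the argument for Theorem~7 in \cite{HeHeSch}, and then to identify the two branches with the two admissible signs of the residue datum at the interior spin point. At $t=0$ the initial data $(\Sigma,\chi,\mathcal D)$ is the genus-$1$ spectral curve $\Sigma=\C/(\Z+\tau_{spec}\Z)$ together with the map $\chi$ determined by \eqref{weierstrass_zeta_L}--\eqref{2lobe_spec_periods} and the trivial lift $\mathcal D$; this satisfies conditions $(1)$--$(5)$ of Theorem~\ref{slitting_tori} because the underlying connection is reducible and its monodromy is already unitary along $S^1$. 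Since $\tau\in i\R$ the abelianizing torus $T^2$ is rhombic, so Lemma~\ref{a^u_symmetries} guarantees that the Mehta--Seshadri section $\alpha^{MS}_{\rho_0,\rho_1}$ is odd and real in the required sense, and one may work in the same real-analytic function spaces of maps on the disc $\{|\lambda|<1+\epsilon\}$ with prescribed pole behaviour as in \cite{HeHeSch}.

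First I would encode the conditions of Theorem~\ref{slitting_tori} as the vanishing of a map $F(t,\cdot)$ between Banach spaces whose unknowns are the spectral modulus $\tau_{spec}$, the odd holomorphic map $\chi^t$, and the odd meromorphic lift $\mathcal D^t$. By the Schwarzian reflection principle (the Remark after Theorem~\ref{lifting_theorem}) it suffices to solve a boundary value problem on the closed unit disc: $\mathcal D^t$ is holomorphic on $\{|\lambda|<1\}$ with the prescribed first-order poles from $(2)$ and $(3)$, and on the boundary circle it must match the Mehta--Seshadri section, $\mathcal D^t(\xi)=\alpha^{MS}_{\rho_0,\rho_1}(\chi^t(\xi))$, together with the two pointwise Sym constraints $(5)$. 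The normalisation $(1)$ and the residue prescription at the interior spin point via \eqref{a_spin_expansion} fix the remaining finite-dimensional freedom. At $t=0$ this $F$ vanishes, and its linearisation in the unknowns is the same Riemann--Hilbert-type Fredholm operator that appears in \cite{HeHeSch}, the only change being that the single weight $\rho$ is replaced by the two pairs $\rho_0=t$, $\rho_1=qt$; this affects the residue data and the boundary coupling but not the analytic structure of the operator.

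The two distinct flows arise from the two admissible values of the residue $\mu_\gamma$ at the spin point $\gamma$ lying inside the unit disc. By \eqref{a_spin_expansion} the extension of $\mathcal D^t$ across this spin point requires $\mu_\gamma=\pm(\rho_0\pm\rho_1)$, and Theorem~\ref{2:1} says the induced parabolic structure is stable, strictly semi-stable, or unstable according to the sign of $\mu_\gamma$. Since $\rho_0=t>0$ and $\rho_1=qt>0$, choosing the sign with $\mu_\gamma>0$ produces the curve $\Sigma^+$ with everywhere-stable structure inside the disc, while $\mu_\gamma<0$ produces $\Sigma^-$ with exactly one unstable point, namely $\gamma$ itself. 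In each case one tracks the branch points of $\Sigma^\pm$ as they emerge from $\lambda=0$ and from $\gamma$ while keeping the curve rectangular, which is forced by the reality of $\chi$ and of $\alpha^{MS}$ from Lemma~\ref{a^u_symmetries}. The closedness of the analytically continued surface for rational $t,q$ then follows from Theorem~\ref{branched_CMC}.

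The main obstacle is the invertibility of the linearised operator, equivalently the implicit dependence of $\alpha^{MS}_{\rho_0,\rho_1}$ on $\chi$. Because this section is defined only implicitly through Mehta--Seshadri \cite{MSe}, one must control its derivative along the boundary circle and show that the boundary-matching operator has trivial kernel and cokernel once the finite-dimensional normalisations are imposed. This is precisely the technical heart of \cite{HeHeSch}, and the work here is to verify that the passage from one weight to the two weights $\rho_0,\rho_1$, together with the presence of an interior spin point already in the genus-$1$ initial data, does not create a kernel; granting this, the implicit function theorem yields the unique families $\chi^t_\pm$, $\mathcal D^t_\pm$ for $t\sim 0$, and hence the two short-time flows of CMC surfaces with boundary asserted in the theorem.
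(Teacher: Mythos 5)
Your proposal follows essentially the same route as the paper, which itself only sketches the argument by deferring to the proof of Theorem~7 in \cite{HeHeSch}: the key ingredients you cite (the reality/oddness of $\alpha^{MS}_{\rho_0,\rho_1}$ from Lemma~\ref{a^u_symmetries} for the rhombic torus $T^2$, the same function spaces and implicit-function-theorem setup as in \cite{HeHeSch} with the single weight replaced by the pair $\rho_0=t$, $\rho_1=qt$, the identification of the two flows with the sign of $\mu_\gamma$ at the interior spin point via Theorem~\ref{2:1}, and Theorem~\ref{branched_CMC} for closedness at rational times) are exactly the ones the paper relies on. Your write-up is in fact more explicit than the paper's about where the dichotomy stable/unstable comes from, and it correctly flags that the invertibility of the linearised boundary-matching operator is the deferred technical core.
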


%%%%%%%%%%%%%%%%%%%%%%%%%%%%%%%%%%%%%%%%%%%%%%%%%%%%%%%%%%%%%%%%%%%%%%%%%%%%%%
%           Bibliography                                                     %
%%%%%%%%%%%%%%%%%%%%%%%%%%%%%%%%%%%%%%%%%%%%%%%%%%%%%%%%%%%%%%%%%%%%%%%%%%%%%%

\end{document}